\def\R{\mathbb{R}}
\def\Fpb{\overline{\mathbb{F}}_p}
\def\L{\mathscr{L}}
\def\La{\Lambda}
\def\A{\text{{\bf A}}}   
\def\O{\mathcal{O}} 
\def\F{\mathscr{F}}
\def\*{^\times }
\def\G{\mathscr{G}}
\def\Ga{\text{\bf G}_a}
\def\BB{\boldsymbol{B}}
\def\a{\alpha}
\def\ph{\varphi}
\def\lssi{\Longleftrightarrow}
\def\drt{\rightarrow}
\def\ldrt{\longrightarrow}
\def\Q{\mathbb{Q}}
\def\Qp{\mathbb{Q}_p}
\def\Zp{\mathbb{Z}_p}
\def\Z{\mathbb{Z}}
\def\Hom{\text{Hom}}
\def\Gal{\text{Gal}}
\def\={\! = \!}
\def\spf{\text{Spf}}
\def\E{\mathscr{E}}
\def\limp{\underset{\longleftarrow}{\text{ lim }}\;}
\def\limi{\underset{\longrightarrow}{\text{ lim }}\;}
\def\iso{\xrightarrow{\;\sim\;}}
\def\Aut{\text{Aut}}
\def\GL{\hbox{GL}}
\def\xrig{\xrightarrow}
\def\M{\mathcal{M}}
\def\X{\mathfrak{X}}
\def\GG{\Gamma}
\def\Ext{\text{Ext}}
\def\bc{\backslash}
\def\spa{\text{Spa}}
\def\Lie{\text{Lie}}
\def\Fil{\mathrm{Fil}}
\def\unp{ \big [ {\textstyle\frac{1}{p}}\big ]}
\def\sp{\mathrm{sp}}
\def\Kb{\overline{K}}
\def\<<{\langle\langle}
\def\>>{\rangle\rangle}
\def\BTrig{\mathrm{BT}^{\mathrm{rig}}}
\def\BT{\mathrm{BT}}
\newcommand{\Bmu}{\mbox{$\raisebox{-0.59ex}
  {$l$}\hspace{-0.18em}\mu\hspace{-0.88em}\raisebox{-0.98ex}{\scalebox{2}
  {$\color{white}.$}}\hspace{-0.416em}\raisebox{+0.88ex}
  {$\color{white}.$}\hspace{0.46em}$}{}}
\newcommand*{\longhookrightarrow}{\ensuremath{\lhook\joinrel\relbar\joinrel\rightarrow}}
 \DeclareMathSymbol{B}{\mathalpha}{operators}{`B}
\author{Laurent Fargues}
\address{Laurent Fargues, CNRS, Institut de Math\'ematiques de Jussieu, 4 place Jussieu 75252 Paris}
\email{laurent.fargues@imj-prg.fr}
\thanks{L'auteur a bénéficié du support  du projet ERC Advanced grant 742608 "GeoLocLang". }
\begin{document}

\title{Groupes analytiques rigides $p$-divisibles II}

\date{\today}

\maketitle

\renewcommand\labelitemi{\textbullet}

\newtheorem{theo}{Théorème}[section]
\newtheorem*{theon}{Théorème}

 \newtheorem{prop}[theo]{Proposition}

\newtheorem{coro}[theo]{{Corollaire}}

\newtheorem{lemme}[theo]{Lemme}
\newtheorem{question}[theo]{Question}

\newtheorem{defi}[theo]{Définition}
\newtheorem{exem}[theo]{Exemple}
\newtheorem{conj}[theo]{Conjecture}

\newtheorem{rema}[theo]{Remarque}

\renewcommand\thefootnote{}
\footnotetext{2010 Mathematics Subject Classification. Primary: 11G18; Secondary: 14G20.}

\renewcommand{\thefootnote}{\arabic{footnote}}

\markright{Groupes analytiques rigides $p$-divisibles II}

\selectlanguage{french}

\begin{abstract}
Soit $K$ un corps $p$-adique. On continue de développer la théorie des groupes analytiques rigides $p$-divisibles sur $K$. On explique par exemple comment retrouver la catégorie des espaces de Banach-Colmez à partir des groupes analytiques rigides $p$-divisibles \og en niveau fini\fg{} sans espaces perfectoïdes. On établit ensuite des résultats sur les familles de groupes analytiques rigides $p$-divisibles. Cela nous permet de démontrer un théorème de \og minimalité\fg{} au sens de la géométrie birationnelle des modèles entiers des espaces de Rapoport-Zink non-ramifiés.
\end{abstract}

\selectlanguage{english}

\begin{abstract}
Let $K$ be a $p$-adic field. We continue to develop the theory of rigid analytic $p$-divisible groups over $K$. For example, we explain how to find back the category of Banach-Colmez spaces from rigid analytic $p$-divisible groups "in finite level" without perfectoid spaces. We then establish some results about families of rigid analytic $p$-divisible groups. This allows us to prove a "minimality" result in the sense of birationnal geometry 
for integral models of unramified Rapoport-Zink spaces.   
\end{abstract}

\selectlanguage{french}

\section*{Introduction}

Le but de cet article est double. On commence par rappeler et compléter les résultats de \cite{GroupesAnalytiquesI} sur les groupes analytiques rigides $p$-divisibles. On développe ensuite une théorie des familles de groupes analytiques rigides $p$-divisibles sur des bases qui sont des espaces rigides. On applique  cela afin de montrer un résultat de \og minimalité\fg{} des modèles entiers d'espaces de Rapoport-Zink non-ramifiés (\cite{RZ}) au sens de la géométrie birationnelle.

\subsection*{Description des différentes sections}

La section \ref{sec1} est consacrée à des rappels sur les groupes analytiques rigides $p$-divisibles. On en profite pour  \og nettoyer \fg{} les notations de \cite{GroupesAnalytiquesI} afin de rendre le texte plus clair.
\\

Dans la section \ref{sec2} on regroupe des résultats conséquences de \cite{GroupesAnalytiquesI}, notamment à la vue du théorème de classification de Scholze-Weinstein (\cite{ScholzeWeinstein}) apparu après et qui utilise les résultats de \cite{GroupesAnalytiquesI}. 

Tout d'abord on explique comment associer à toute réseau Galois invariant $\rho$ dans une représentation de Hodge-Tate à poids dans $\{0,1\}$ un groupe analytique rigide $G(\rho)$. En général ce groupe \og n'a pas bonne réduction\fg{} et {\it on caractérise  géométriquement les représentations $\rho$ semi-stables} (prop. \ref{prop:caracterisation semi stable}): 
$$
\rho\text{ semi-stable } \lssi
\begin{cases}
G(\rho)^0 \simeq \mathring{\BB}^d , d=\dim G(\rho) \\
\text{l'action de Galois sur }
\pi_0^{\text{géo}} (G(\rho)) \text{ est non-ramifiée}.
\end{cases}
$$
Cela nous permet de donner une infinité d'exemples de $\Qp$-espaces rigides isomorphes à des boules ouvertes sur $\mathbb{C}_p$ mais pas sur toute extension de degré fini de $\Qp$. Ce type de méthode permet également de généraliser l'exemple de Scheinder-Teitelbaum (\cite{SchneiderTaitelbaumFourier}) , cf. exemple \ref{exem:ST}.
\\

Dans la section \ref{sec:sec3} on montre que {\it la catégorie des espaces de Banach-Colmez (\cite{Colmez2}) \og descend en niveau fini \fg{}}. Plus précisément, on montre (théo. \ref{theo:BC}) que les deux catégories suivantes sont équivalentes:
\begin{enumerate}
\item 
La sous-catégorie des espaces de Banach-Colmez formée des extensions d'un $C$-espaces vectoriel de dimension finie par un $\Qp$-espace vectoriel de dimension finie 
\item  La catégorie dont les objets sont les $C$-groupes analytiques rigides $p$-divisibles
munie des morphismes qui sont les classes d'équivalence 
de {\it quasi-morphismes} à isogénie près.
\end{enumerate}

Cette notion de quasi-morphisme (déf. \ref{defi:quasimorphismes})
est inspirée de la notion analogue en théorie géométrique des groupes et de la théorie des quasi-logarithmes des groupes formels. 
\\

La section \ref{sec:sec4} est préparatoire dans l'optique de démontrer le théorème principal \ref{theo:minimal}. Il s'agit simplement d'étendre la notion de groupe analytique rigide $p$-divisible à des bases plus générales (déf. \ref{defi:famille}) et d'étendre le critère de \cite{GroupesAnalytiquesI} qui caractérise les fibres génériques de groupes formels $p$-divisibles en termes de boules (théo. \ref{theo:equivalence entre groupes formels rigides et entiers}).
\\

La section \ref{sec:sec5} contient le théorème principal de cet article. 
Une des motivations est de {\it comprendre comment retrouver les modèles entiers des espaces de Rapoport-Zink à partir d'une donnée additionnelle}. Un point de vue là dessus est donné 
dans \cite{Lourenco} (coro. 6.6) et \cite{ScholzeBerkeley} (théo. 25.4.1) où la donnée additionnelle consiste en la perfection de la fibre spéciale réduite.  On prend ici un point de vue différent. Il s'agit également d'un point de vue assez orthogonal aux résultats d'extensions de \cite{VasiuZinkPurity}.

Soit donc $\M$ un espace de Rapoport-Zink de type PEL non-ramifié sur $\spf (\breve{\Z}_p)$ de fibre générique $\M_\eta$ (\cite{RZ}). On note $\O^+$ le sous-faisceau de $\O_{\M_\eta}$ des fonctions bornées par $1$. Notons $\E^+$ le $\O^+$-module localement libre sur $\M_\eta$ obtenu à partir de l'algèbre de Lie de la déformation universelle sur $\M$.

\begin{theon}[théo. \ref{theo:minimal}]
Soit $\X$ un $\spf ( \breve{\Z}_p)$-schéma formel localement formellement de type fini formellement lisse. Soit $f:\X_\eta\drt \M_\eta$ un morphisme de $\breve{\Q}_p$-espaces rigides. Sont équivalents:
\begin{enumerate}
\item $f$ s'étend en un morphisme $\X\drt \M$
\item le $\O_{\X_\eta}^+$-module localement libre $f^* \E^+$ est localement libre sur $|\X|$ via $\sp:|\X_\eta|\drt |\X|$.
\end{enumerate}
En particulier le couple $(\M_\eta, \E^+)$ détermine complètement le modèle entier $\M$.
\end{theon}

On renvoie au corollaire \ref{coro:caracterisation concrete} pour une version peut-être plus \og concrète\fg{} de que signifie le théorème précédent.

Par exemple, si $\X$ est lisse, i.e. $p$-adique, quasi-compact, d'après Raynaud $f$ est induit par un morphisme de schémas formels $g$
$$
\begin{tikzcd}
\widetilde{\X} \ar[d] \ar[r,"g"] & \M \\
\X \ar[ru, dashed,"\exists ?"']
\end{tikzcd}
$$
où $\widetilde{\X}\drt \X$ est un éclatement formel admissible et où on peut supposer $\widetilde{\X}$ normal. Notons $H$ la déformation universelle sur $\M$, un groupe $p$-divisible.
 Le théorème précédent dit alors qu'on peut {\it descendre $g$ le long de l'éclatement} si et seulement si $g^*\Lie\, H$ est libre au dessus d'un recouvrement de $\widetilde{\X}$ provenant d'un recouvrement de $\X$ si et seulement $g^*\Lie\, H$ descend en un fibré sur $\X$.

L'outil principal de la preuve du théorème \ref{theo:minimal} est le théorème d'annulation de Bartenwerfer (\cite{BartenwerferAnnulation})
$$
H^1 (\BB^n_K,\O^+)=0
$$
pour une boule fermée sur $K|\Qp$. Cela permet de montrer (cf. prop. \ref{prop:extension groupes sur boule} et \ref{prop:extension quasi morphismes sur boule}) que si $K|\breve{\Q}_p$, un morphisme
$$
f:\BB^n_K \ldrt \M_\eta
$$
s'étend en un morphisme entier $\widehat{\A}^n_{\O_K}\drt \M$ si et seulement si $f^*\E^+$ est localement libre sur $|\widehat{\A}^n_{\O_K}|$. Dès lors la preuve du théorème \ref{theo:minimal} se décompose en les étapes suivantes:
\begin{itemize}
\item On montre grâce au résultat précédent que, via $f:\X_\eta\drt \M_\eta$, les fibres de Milnor de $\X\drt \spf (\breve{\Z}_p)$, qui sont des boules ouvertes, s'envoient sur les fibres de Milnor de $\M\drt \spf (\breve{\Z}_p)$.
\item On en déduit que $|f|$ se factorise en une application $|\X|\drt |\M|$.
\item On montre que cette application est continue en utilisant que la restriction de $\omega$ à toute composante irréductible de $\M_{red}$ est ample.
\end{itemize}

Notons enfin que dans le théorème \ref{theo:description des points} on donne une description explicite de $\M(\Fpb)$ à partir du couple $(\M_\eta,\E^+)$.

\section{Rappels de \cite{GroupesAnalytiquesI}}\label{sec1}
Soit $K|\Qp$ un corps valué complet. On note $\overline{K}$ une cloture algébrique de $K$ et $C=\widehat{\Kb}$.

\begin{defi}
Un $K$-groupe  analytique rigide $p$-divisible est un $K$-groupe analytique rigide commutatif $G$
tel que la multiplication par $p$, $G\xrig{\times p} G$, 
\begin{enumerate}
\item est topologiquement nilpotente au sens où si $U$ et $V$ sont deux voisinages affinoïdes de $0$ alors pour $n\gg 0, p^n U\subset V$.
\item est un morphisme fini surjectif.
\end{enumerate}
On note $\BTrig_K$ la catégorie correspondante.
\end{defi}

Un tel groupe est une extension (\cite{GroupesAnalytiquesI} prop. 16)
$$
0\ldrt G[p^\infty]\ldrt G\xrig{\ \log_G\ } \Lie (G)\otimes  \Ga\ldrt 0.
$$
Rappelons (\cite{GroupesAnalytiquesI} coro. 17) qu'il y a une équivalence entre $\BTrig_K$ et la catégtorie des triplets $(\La,W,\a)$ où $\La$ est une représentation continue de $\Gal (\Kb |K)$ à valeurs dans un $\Zp$-module libre de rang fini, $W$ est un $K$-espace vectoriel de dimension finie et 
$\a:W_C(1)\drt \La_C$ est un morphisme $C$-linéaire compatible à l'action de Galois.

À un tel triplet on associe le groupe $G$ obtenu par descente galoisienne (\cite{GroupesAnalytiquesI} sec. 4.1)  à partir du $C$-groupe analytique rigide obtenu par produit fibré
$$
\begin{tikzcd}
0\ar[r] &   \La\otimes \Qp/\Zp\ar[d, equal] \ar[r] & G_C \ar[d] \ar[r, "\log_{G_C}"] & W_C \otimes \Ga \ar[d, "\a(-1)"] \ar[r] & 0 \\
0 \ar[r] & \La(-1)\otimes \mu_{p^\infty} \ar[r] & \La(-1)\otimes \widehat{\mathbb{G}}_m^{rig} \ar[r, "\mathrm{Id}\otimes \log"] & \La_C(-1) \otimes \Ga \ar[r] & 0
\end{tikzcd}
$$
On a alors $\La= T_p (G[p^\infty])$ et $W=\Lie\, G$.
\\

Soit $\BT_{\O_K}^{\mathrm{f}}$ la catégorie des groupe formels $p$-divisibles sur $\O_K$.
Rappelons  qu'il y a un foncteur fibre générique pleinement fidèle (\cite{GroupesAnalytiquesI} prop. 29)
$$
\BT_{\O_K}^{\mathrm{f}}\longhookrightarrow \BTrig_K
$$
d'image essentielle les groupes $G$ tels que $$G\simeq \mathring{\BB}^d_K$$ en tant que $K$-espace rigide (\cite{GroupesAnalytiquesI} théo. 6.1).
\\
D'après \cite{ScholzeWeinstein} (qui font converger l'approche de la remarque 10 de \cite{GroupesAnalytiquesI}) il s'agit d'une équivalence lorsque $K$ est algébriquement clos. Le triplet $(\La,W,\a)$ associé à $\G^{\mathrm{rig}}$, $\G\in \BT_{\O_K}^f$, est $$\big (T_p (\G), \Lie\, \G \unp,\a_{\G^D}^*(1)\big )$$ où  $\a_{\G^D}$ désigne l'application des périodes de Hodge-Tate du dual de Cartier $\G^D$ (\cite{GroupesAnalytiquesI} théo. 3.4).

\section{Groupes  analytiques rigides $p$-divisibles, représentations Galoisiennes et formes tordues de boules ouvertes}\label{sec2}

Dans cette section $K$ est de valuation discrète à corps résiduel parfait. 
Soit $$\rho: \Gal (\Kb|K)\drt \GL (\La)$$
une représentation Galoisienne à valeurs dans  un $\Zp$-module libre de rang fini tel que $\rho\unp$ soit de Hodge-Tate à poids dans $\{0,1\}$. Notons 
$$
G(\rho)
$$
le groupe analytique rigide associé au triplet $(\La, \La_C(-1)^{\Gal (\Kb|K)}, \a)$
avec $\a$ l'inclusion canonique. Cela définit une correspondance fonctorielle
$$
\rho \mapsto G(\rho).
$$
Ainsi, à chaque telle représentation on peut associer un groupe analytique rigide qui n'a pas forcément \og bonne réduction\fg{} contrairement au cas cristallin. On peut par exemple caractériser géométriquement les représentations semi-stables en termes géométriques.

\begin{prop}\label{prop:caracterisation semi stable}
La représentations $\rho\unp$ est semi-stable si et seulement si $G(\rho)^0 \simeq \mathring{\BB}^d_K$ et l'action de $\Gal (\Kb |K)$ sur $\pi_0 (G)(\Kb)$ est non-ramifiée.
\end{prop}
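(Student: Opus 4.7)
La stratégie est de traduire chacune des deux conditions géométriques de l'énoncé en une condition sur la représentation galoisienne $\rho\unp$, puis d'invoquer la caractérisation classique des représentations semi-stables à poids de Hodge-Tate dans $\{0,1\}$ comme extensions de non ramifiées par cristallines.

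Le point de départ est la suite connexe-étale
$$
0\ldrt G(\rho)^0 \ldrt G(\rho)\ldrt \pi_0 (G(\rho)) \ldrt 0
$$
qui, au niveau des modules de Tate, donne une suite exacte
$$
0\ldrt \La' \ldrt \La \ldrt \La'' \ldrt 0, \qquad \La':=T_p G(\rho)^0,\ \La'':=\La/\La',
$$
et identifie le module galoisien $\pi_0 (G(\rho))(\Kb)$ avec $\La''\otimes \Qp/\Zp$. La seconde condition du membre de droite revient donc à dire que $\La''\unp$ est une représentation non ramifiée. Pour la première, d'après le rappel de la section \ref{sec1} (image essentielle du foncteur fibre générique $\BT_{\O_K}^{\mathrm{f}}\hookrightarrow \BTrig_K$), $G(\rho)^0 \simeq \mathring{\BB}^d_K$ équivaut à ce que $G(\rho)^0$ provienne d'un groupe formel $p$-divisible sur $\O_K$, ce qui, via le théorème de Tate--Fontaine, équivaut à la cristallinité de $\La'\unp$ (dont les poids de Hodge-Tate sont automatiquement dans $\{0,1\}$).

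Les deux conditions géométriques réunies reviennent ainsi à ce que $\rho\unp$ soit une extension d'une représentation non ramifiée par une cristalline à poids dans $\{0,1\}$. Le sens $\Leftarrow$ du théorème en découle immédiatement : toute extension de semi-stables est semi-stable, et les représentations non ramifiées comme les cristallines le sont. Le sens $\Rightarrow$ constitue l'obstacle principal. Il s'établit soit par l'analyse de la monodromie $N$ sur $D_{st}(\rho\unp)$ --- la relation $N\ph = p\ph N$, combinée aux poids de Hodge dans $\{0,1\}$ et aux pentes de Newton dans $[0,1]$, confine l'image de $N$ à la partie de pente de Frobenius nulle, ce qui permet d'extraire un sous-objet cristallin admissible avec quotient non ramifié --- soit en invoquant le résultat classique, dû à Grothendieck et Raynaud, selon lequel un groupe $p$-divisible sur $K$ à réduction semi-stable se décompose en extension d'un groupe étale sur $\O_K$ par un groupe formel sur $\O_K$.
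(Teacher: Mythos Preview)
Your proof is correct and follows essentially the same route as the paper. Both reduce the statement to the assertion that a semi-stable representation with Hodge--Tate weights in $\{0,1\}$ is exactly an extension of an unramified representation by a crystalline one with strictly positive Frobenius slopes, and both establish the nontrivial direction via the observation that $N$ shifts Newton slopes by $-1$ on $D_{st}$, forcing $N$ to factor through the slope-$0$ part. Your write-up is in fact more explicit than the paper's on the geometric-to-Galois translation: you spell out the connected--\'etale sequence of $G(\rho)$, identify $\La'=T_pG(\rho)^0$ and $\La''\otimes\Qp/\Zp=\pi_0(G(\rho))(\Kb)$, and invoke the Breuil--Kisin theorem (what you call ``Tate--Fontaine'') to link $G(\rho)^0\simeq\mathring{\BB}^d_K$ with crystallinity of $\La'\unp$; the paper leaves all of this implicit. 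One small imprecision: in your monodromy sketch you speak of extracting ``un sous-objet cristallin admissible avec quotient non ramifi\'e'', but on the $(\ph,N)$-module side it is the slope-$0$ part $D_0$ that is the sub-object (unramified) and $D/D_0\cong D_{>0}$ the crystalline quotient --- the directions swap under the contravariant $D_{st}$, which is what gives the correct extension $0\to V'_{\mathrm{cris}}\to V\to V''_{\mathrm{nr}}\to 0$ on the representation side.
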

\begin{proof}
Soit $A=(D,\ph,N,\Fil^\bullet D_K)$ un $(\ph,N)$-module filtré faiblement admissible tel que
$\Fil^0 D_K\\ =D_K$ et $\Fil^2 D_K=0$. Notons $D_{>0}$ la \og partie de pente $>0$\fg{} dans l'isocristal $(D,\ph)$. Puisque $(D,\ph)$ est à pente dans $[0,1]$ et $N:(D,\ph)\drt (D,p\ph)$, $N .D_{>0}=0$. On vérifie aussitôt que $B=(D_{>0}, \ph, \Fil^\bullet D_K\cap D_{>0}\otimes_{K_0} K)$ est admissible. On en déduit que $A$ est une extension de l'isocristal filtré $(D_0,\ph)$ muni de la filtration triviale $\Fil^0 D_{0, K}=D_{0, K}$ et $\Fil^1 D_{0, K}$ par $B$. 

De cette analyse il est aisé de déduire que la catégorie des représentations semi-stables à poids de Hodge-Tate dans $\{0,1\}$ coïncide avec celle des extensions de représentations non-ramifiées par les représentations cristallines à poids de Hodge-Tate dans $\{0,1\}$ dont l'isocristal associé est à pentes dans $]0,1]$.
\end{proof}

Notons le corollaire suivant.

\begin{coro}
A chaque réseau Galois invariant $\rho$ dans une représentation de Hodge-Tate à poids dans $\{0,1\}$ qui n'est pas potentiellement semi-stable on peut associer un $K$-espace rigide $G(\rho)^0$ qui est isomorphe à une boule ouverte après extension des scalaires à $C$, mais ne l'est pas sur toute extension de degré fini de $K$.
\end{coro}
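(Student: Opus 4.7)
The plan is to deduce the corollary from Proposition~\ref{prop:caracterisation semi stable} applied over two different bases: over $C$ to obtain the open-ball structure, and over a suitable finite extension of $K$ to rule out a trivialization there. The construction $\rho \mapsto G(\rho)^0$ is already in hand from the preceding discussion, so only the two geometric assertions require justification.

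For the $C$-base change, I would observe that $G(\rho)^0_C$ is a connected rigid $p$-divisible group over an algebraically closed $p$-adic field. By the Scholze-Weinstein classification recalled at the end of Section~\ref{sec1}, it is therefore the rigid generic fiber of a formal $p$-divisible group over $\O_C$ of dimension $d = \dim_K \Lie\, G(\rho)$, and any such generic fiber is isomorphic to $\mathring{\BB}^d_C$ by the criterion of \cite{GroupesAnalytiquesI}.

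For the negative statement, I argue by contradiction: suppose $G(\rho)^0 \otimes_K L \simeq \mathring{\BB}^d_L$ for some finite extension $L/K$. The crucial auxiliary input is that the Galois action on $\pi_0(G(\rho))(\Kb)$ is automatically potentially unramified, since this action is precisely the \'etale quotient of $\rho$, which has all Hodge-Tate weights equal to $0$ and is therefore potentially unramified by Sen's theorem. Choosing a finite extension $L' \supset L$ over which this action becomes unramified, and noting that $G(\rho)^0 \otimes_K L' \simeq \mathring{\BB}^d_{L'}$ still holds by base change, Proposition~\ref{prop:caracterisation semi stable} applied over $L'$ forces $\rho|_{L'}\unp$ to be semi-stable, contradicting the hypothesis. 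The one real subtlety is the identification of the geometric $\pi_0$ with the \'etale/weight-$0$ part of $\rho$ together with the invocation of Sen's theorem for weight-$0$ representations; once these are granted, the remainder is a direct reduction.
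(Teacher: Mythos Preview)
Your argument is correct and is exactly what the paper leaves implicit (the corollary is stated without proof immediately after Proposition~\ref{prop:caracterisation semi stable}). The two auxiliary inputs you isolate---Scholze--Weinstein over $C$ for the positive assertion, and Sen's theorem to make the $\pi_0$-action potentially unramified in the negative one---are precisely the ingredients needed beyond Proposition~\ref{prop:caracterisation semi stable} itself, and your compatibility check $G(\rho)\otimes_K L'=G(\rho|_{L'})$ (which uses that $\rho$ is Hodge--Tate) is the only other point requiring care.
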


Une autre manière de construire des formes tordues de boules ouvertes du type précédent consiste à choisir $\La$ comme précédemment mais de prendre $W\subsetneq \La_C(-1)^{\Gal (\Kb|K)}$.

\begin{exem}\label{exem:ST}
Supposons que $K=\Qp$ et soit $L|\Qp$ de degré fini muni d'un plongement $\tau:L\hookrightarrow C$. Le groupe $G$ sur $\Qp$ associé au triplet $(\O_L (1), C,\a)$ avec $\a: C(1)\hookrightarrow L\otimes_{\Qp} C(1)$ induit par $\tau$ coïncide avec celui étudié par Schneider et Teitelbaum dans \cite{SchneiderTaitelbaumFourier}. On renvoie également au théorème 6.1 de \cite{BergerColmezSen} pour la construction de formes tordues d'espaces homogènes quotients de boules ouvertes. 
\end{exem}

%
%
%

\section{Quasi-moprhismes et déperfectoïdisation des espaces de Banach-Colmez}
\label{sec:sec3}

Rappelons la définition suivante (\cite{GroupesAnalytiquesI} sec. 8).

\begin{defi}\label{defi:quasimorphismes}
Pour $G_1$ et $G_2$ deux groupes analytiques rigides $p$-divisibles 
\begin{enumerate}
\item 
On note
$$
Q\Hom (G_1,G_2)
$$
le groupe des morphismes de $K$-espaces rigides $f:G_1\drt G_2$ tels que 
$$
f(x+y)-f(x)-f(y): G_1\times G_1\ldrt G_2
$$
est \og borné\fg{} i.e. d'image quasi-compacte dans $G_2$.
\item On note 
$$
Q\Hom (G_1,G_2)/{{\sim} } 
$$
le quotient par le sous-groupe des morphismes $G_1\drt G_2$ d'image \og bornée\fg{}.
\end{enumerate}
\end{defi}

Cette notion est inspirée de la notion de {\it quasi-logarithme des groupes formels} $p$-divisibles (\cite{Fontaine5} sec. 6.4 et \cite{Katz2})  mais également de la notion de {\it quasi-morphisme en théorie géométrique des groupes}. On a par exemple, du point de vue de la théorie des groupes classiques
\begin{eqnarray*}
Q\Hom (\Z,\Z)/{\sim} & \iso &  \R \\
{[} f] & \longmapsto & \underset{n\drt +\infty}{\lim} \tfrac{f(n)}{n}
\end{eqnarray*}
d'inverse $x\mapsto \big [ n\mapsto [nx] \big ]$.
On va utiliser des formules $p$-adiques analogues du type \og $\underset{n\drt +\infty}{\lim} p^n f(p^{-n}x)$\fg{}.
\\

Soit $\mathfrak{a}\subsetneq \O_K$ un idéal principal non nul. Rappelons que, par rigidité des morphismes entre groupes $p$-divisibles, le membre de gauche dans la formule de  l'énoncé du lemme qui suit 
ne dépend pas du choix d'un tel $\mathfrak{a}$.

\begin{lemme}\label{lemme:quasi morphismes et morphismes modulo p}
Soient $\G_1$ et $\G_2$ deux groupes formels $p$-divisibles sur $\O_K$. On  a une identification 
$$
\Hom (\G_1\otimes \O_K /\mathfrak{a} , \G_2\otimes \O_K /\mathfrak{a})\unp= Q\Hom (\G_1^{rig} ,\G_2^{rig} ) /{\sim} \unp.
$$
\end{lemme}
\begin{proof}
Il suffit d'associer à un morphisme $\bar{f}:\G_1\otimes \O_K /\mathfrak{a} \drt \G_2\otimes \O_K /\mathfrak{a}$, la classe du morphisme $f^{rig}$ où $f:\G_1\drt \G_2$ est n'importe quel relèvement comme morphisme de $\O_K$-schémas formels.
\end{proof}

On prend le point de vue des faisceaux pro-étales sur les espaces de Banach-Colmez (\cite{Colmez2}, \cite{Courbe} chap. 3, \cite{AC-BC}).

\begin{theo}\label{theo:BC}
Supposons $K=C$ algébriquement clos. 
Le foncteur revêtement universel $$G\mapsto \underset{\times p}{\limp} G^\diamond$$  induit une équivalence entre
\begin{enumerate}
\item 
 La catégorie des groupes analytiques rigides $p$-divisibles munis des classes d'équivalence de quasi-morphismes à isogénie près.
 \item  La catégorie des espaces de Banach-Colmez (\cite{Colmez2}, \cite{Colmez3}) extension d'un $C$-espace vectoriel de dimension finie par un $\Qp$-espace vectoriel de dimension finie.
 \end{enumerate}
\end{theo}
\begin{proof}
Le foncteur est défini de la façon suivante. Si $f:G_1\drt G_2$ est un quasi-morphisme alors on lui associe 
\begin{eqnarray*}
\underset{\times p}{\limp} G^\diamond_1 & \ldrt & \underset{\times p}{\limp} G^\diamond_2 \\
(x_n)_{n\geq 0} & \longmapsto & \big ( \underset{k\drt +\infty}{\lim} p^k f(x_{n+k})\big )_{n\geq 0}.
\end{eqnarray*}
La signification de l'expression précédente est que si $(R,R^+)$ est une $C$-algèbre affinoïde perfectoïde alors $G_1(R)$ et $G_2 (R)$ sont naturellement des $\Zp$-modules topologiques, une base de voisinages de $0$ étant donnée par les $U(R,R^+)$ avec $U$ un sous-groupe affinoïde. Dès lors la limite précédente a un sens puisque pour un tel $U$, $U(R,R^+)$ est $p$-adiquement complet et $f(px)-pf(x):G_1\drt G_2$ est \og borné\fg{} et donc à valeurs dans un tel sous-groupe affinoïde de $G_2$. Le foncteur est bien à valeurs dans la catégorie annoncée car pour $G\in \BT_C^{rig}$ 
$$
0\ldrt \underline{V_p(G[p^\infty])} \ldrt \underset{\times p}{\limp} G^\diamond \ldrt \Lie\, G\otimes \mathbb{G}_a^{\diamond}\ldrt 0.
$$
La surjectivité essentielle vient de la classification de tels espaces de Banach-Colmez i.e. le calcul (\cite{Colmez2}, \cite{AC-BC})
$$
\Ext^1 ( W\otimes \Ga^{\diamond}, \underline{V}) = \Hom (W(-1),V_C)
$$
donné par l'extension universelle
$$
0\ldrt \underline{\Qp.t} \ldrt \BB^{\ph=p} \xrig{\ \theta\ } \mathbb{G}_a^\diamond\ldrt 0.
$$
Il reste à vérifier la pleine fidélité. 
Soient donc $G_1,G_2\in \BT_C^{rig}$. 
Utilisant Scholze-Weinstein on peut écrire 
$$
G_1=\G_1^{rig}\oplus \Ga^{n_1} \oplus (\Qp/\Zp)^{m_1}  \text{ et } G_2=\G_2^{rig} \oplus \Ga^{n_2} \oplus (\Qp/\Zp)^{m_2}
$$
avec $\G_1,\G_2\in \BT_{\O_C}^f$. Traitons d'abord le cas $n_1=m_1=n_2=m_2=0$. On utilise le lemme \ref{lemme:quasi morphismes et morphismes modulo p}. Pour $\G\in \BT_{\O_C}^{\mathrm{f}}$ soit $\widetilde{\G}\in \BT_{\O_{C^\flat}}$ tel que 
$$
\G\otimes \O_C/p = \widetilde{\G}\otimes \O_{C^\flat}/\underline{p}. 
$$
Alors, 
$
\underset{\times p}{\limp} \G^{rig,\diamond}
$
est représenté par le $C^\flat$ groupe perfectoïde
$
\widetilde{\G}^{\, rig,1/p^\infty}
$ (cf. chapitre 3 de \cite{Courbe}). Le résultat se déduit alors de l'égalité 
$$
\Hom (\widetilde{\G}_1^{\, rig,1/p^\infty}, \widetilde{\G}_2^{\, rig,1/p^\infty}) = \Hom ( \G_1\otimes \O_C/p,\G_2\otimes \O_C /p)\unp
$$ 
obtenu en envoyant un morphisme $f:\widetilde{\G}_1^{\, rig,1/p^\infty}\drt \widetilde{\G}_2^{\, rig,1/p^\infty}$ sur $p^{-N}\times $ le morphisme
$$
(p^N f)^*: \O\big (\widetilde{\G}_1^{\, rig,1/p^\infty}\big )^+/\underline{p} \drt \O\big ( \widetilde{\G}_1^{\, rig,1/p^\infty}\big )^+ /\underline{p}
$$ 
restreint à $\O(\G_1\otimes \O_C/p)$
pour $N\gg 0$. 

Traitons maintenant du cas de $G_1=\Qp/\Zp$ et $G_2$ quelconque. Choisissons une section ensembliste $s$ de la projection $\Qp\drt \Qp/\Zp$. Si $f:\underline{\Qp}\drt \underset{\times p}{\limp} G^\diamond$ on lui associe le morphisme de diamants
$$
\underline{\Qp/\Zp} \xrig{ \ s\ } \underline{\Qp} \xrig{\ f\ }\underset{\times p}{\limp} G^\diamond \xrig{\text{ proj }} G^\diamond.
$$ 
Celui-ci provient d'un morphisme $\Qp/\Zp\drt G$ qui est un quasi-morphisme car $\mathrm{proj}\circ f_{|\underline{\Zp}}$ est à valeurs dans un ouvert quasi-compact de $|G^\diamond|=|G|$. On vérifie que l'association $f\mapsto $ [la classe de ce quasi-morphisme] définit un inverse à 
$$
Q\Hom (\Qp/\Zp, G)/{\sim}\unp \drt \Hom ( \underline{\Qp}, \underset{\times p}{\limp} G^\diamond ).
$$
Le cas des quasi-morphismes de $\Ga$ vers $G\in \BT_C^{rig}$ quelconque se traite immédiatement en utilisant:
\begin{itemize}
\item tout morphisme d'espaces rigides de $\mathbb{G}_a$ vers $\G^{rig}$, $\G\in \BT_{\O_C}^{\mathrm{f}}$, est constant
\item la pleine fidélité du foncteur $(-)^\diamond$ sur les $C$-espaces rigides lisse.
\end{itemize} 
Il reste à traiter le cas des quasi-morphismes de $G$ vers $\mathbb{G}_a$. On peut supposer $G=\G^{rig}$ avec $\G\in \BT_{\O_C}^{\mathrm{f}}$. Le résultat est alors une conséquence de la proposition 32 de \cite{GroupesAnalytiquesI}.
\end{proof}

\begin{rema}
Par définition, 
tout espace de Banach-Colmez est un quotient d'un espace intervenant dans le théorème \ref{theo:BC} par un sous-$\Qp$-espace vectoriel de dimension finie. Ce théorème permet donc de reconstruire la catégorie des espaces de Banach-Colmez à partir d'objets de géométrie rigide classique non-perfectoïdes. On peut donc reconstruire la courbe (\cite{Courbe}) à partir de ces mêmes objets.
\end{rema}

\begin{exem}
On a un diagramme d'identifications
$$
\begin{tikzcd}[column sep=tiny]
0 \ar[r] & Q\Hom ( \Qp/\Zp, \Bmu_{p^\infty})/{\sim}\unp \ar[d,equal] \ar[r] & 
Q\Hom ( \Qp/\Zp, \widehat{\mathbb{G}}_m^{rig})/{\sim} \unp \ar[d, equal]\ar[r,"\log"] &
Q\Hom ( \Qp/\Zp ,\Ga)/{\sim} \unp \ar[d,equal] \ar[r] & 0  \\
0\ar[r] & \Qp t \ar[r] &
 (B^+_{cris})^{\ph=p} \ar[r, "\theta"] & C \ar[r] & 0
\end{tikzcd}
$$
où les identifications verticales sont données par $[f] \mapsto \underset{n\drt +\infty}{\lim} p^nf ( p^{-n} \,\mathrm{ mod }\, \Zp)$. Par exemple, cette formule induit un isomorphisme $Q\Hom ( \Qp/\Zp, \Bmu_{p^\infty})/{\sim}\unp \iso \Hom (\Qp/\Zp,\Bmu_{p^\infty})\unp$.        
\end{exem}

\section{Familles de groupes analytiques rigides $p$-divisibles}
\label{sec:sec4}

Nous allons maintenant étudier les familles de groupes analytiques rigides $p$-divisibles
dans l'optique de démontrer le théorème \ref{theo:minimal}.

\begin{defi}\label{defi:famille}
Un $S$-groupe analytique rigide $p$-divisible est un $S$-groupe analytique rigide commutatif lisse $G$ tel que la multiplication par $p$, $G\xrig{\times p} G$, 
\begin{enumerate}
\item est topologiquement nilpotente au sens où, localement sur $S$, si $U$ et $V$ sont deux voisinages affinoïdes de $0$ alors pour $n\gg 0, p^n U\subset V$.
\item est un morphisme fini surjectif.
\end{enumerate}
\end{defi}

Comme dans \cite{GroupesAnalytiquesI}  un tel groupe est une extension
$$
0\ldrt G[p^\infty]\ldrt G\xrig{\ \log_G\ } \Lie (G)\otimes  \Ga\ldrt 0
$$
où $\Lie (G)$ est un fibré vectoriel sur $S$ et pour tout $n\geq 1$, $G[p^n]$ est un $S$-groupe étale fini (les arguments de la section 1.5 de \cite{GroupesAnalytiquesI} s'adaptent aussitôt). 

\begin{rema}
Considérons un triplet  $(\F, \E,\a)$ où $\F$ est un $\underline{\Z_p}$-système local étale sur $S$, $\E$ un fibré vectoriel et $\a: \E(1)\drt  \F \otimes_{\underline{\Zp}} \O_S$ est un morphisme de faisceaux de $\O_S$-modules sur le site pro-étale de $S$. On peut lui associer un $S$-groupe analytique rigide $p$-divisible $G$ comme dans \cite{GroupesAnalytiquesI} avec $\F=T_p (G[p^\infty])$ et $\E=\Lie\, G$. Néanmoins il se peut à priori que tout les $S$-groupes analytiques rigides $p$-divisibles ne proviennent pas de cette construction. En tentant d'adapter les arguments de la section 3 de \cite{GroupesAnalytiquesI} on tombe sur le problème de savoir si tout élément de $\mathrm{Pic} (\BB^1_S)$ est localement trivial sur $S$, ce qui n'est pas connu même pour $S$ lisse (\cite{KerzSaitoTamme}).
\end{rema}

On dispose maintenant du résultat suivant qui est une généralisation du théorème 6.1 de \cite{GroupesAnalytiquesI}. Lorsque $K$ est de valuation discrète on dispose d'un bonne notion de schéma formel admissible normal sur $\spf (\O_K)$  (\cite{LivreIso} Annexe A2 du Chap. I).

\begin{theo} \label{theo:equivalence entre groupes formels rigides et entiers}
\begin{enumerate}
\item 
Supposons $K$ de valuation discrète et 
soit $\mathscr{S}$ un $\O_K$-schéma formel admissible normal  de fibre générique $S$. Le foncteur fibre générique induit une équivalence entre
\begin{enumerate}
\item Les groupes formels $p$-divisibles sur $\mathscr{S}$
\item Les $S$-groupes analytiques rigides $p$-divisible localement isomorphes sur $|\mathscr{S}|$ à $\mathring{\BB}^d_S$ pour un entier $d$.
\end{enumerate}
\item Supposons $K$ quelconque. Soit $\mathscr{S}=\spf ( R)$ un $\O_K$-schéma formel admissible affine tel que $R$ soit normal et de fibre générique $S$. 
 Le foncteur fibre générique induit une équivalence entre
\begin{enumerate}
\item Les groupes formels $p$-divisibles sur $\mathscr{S}$ dont l'algèbre de Lie est triviale.
\item Les $S$-groupes analytiques rigides $p$-divisibles isomorphes à $\mathring{\BB}^d_S$ pour un entier $d$.
\end{enumerate}
\end{enumerate}
\end{theo}
\begin{proof}
Il suffit de suivre la démonstration de \cite{GroupesAnalytiquesI}, théo. 6.1, pas à pas. Le point est de montrer que si $R$ est $p$-adique normal topologiquement de type fini sur $\O_K$, 
$$
f:R \llbracket x_1,\dots,x_d\rrbracket \ldrt R\llbracket x_1,\dots,x_d \rrbracket
$$
induit un morphisme étale fini $\mathring{\BB}^d_S\drt \mathring{\BB}^d_S$ avec $S=\mathrm{Sp} (R\unp)$, alors $f$ est fini localement libre. On utilise pour cela comme dans 
\cite{GroupesAnalytiquesI} le théorème 5.8 du chapitre V de \cite{Zi1}.
\end{proof}

\section{Un théorème d'extension de morphismes}
\label{sec:sec5}

\subsection{Formes tordues de boules ouvertes}

Soit $S$ un $K$-espace analytique rigide. 
Par définition, une fibration en boules ouvertes pointées est un $S$-espace rigide 
$$
B\drt S
$$
muni d'une section $S\drt B$, localement isomorphe sur $S$ à une boule ouverte $\mathring{\BB}^d$ munie de sa section nulle.

Le faisceau $\underline{\Aut} (\mathring{\BB}^d)$ s'identifie aux éléments de 
$$
\big ( (x_1,\dots,x_d) \O^+ \llbracket x_1,\dots, x_d \rrbracket \big )^d
$$
dont \og la dérivée en $(0,\dots,0)$\fg{}, obtenue en réduisant modulo $(x_1,\dots,x_d)^2$, est un élément de $\GL_d (\O^+)$. La loi de groupe est donnée par la composition de telles séries formelles. On en déduit  la proposition suivante.

\begin{prop}\label{prop:equivalence groupes formels et rigides}
Si $S$ est un $K$-espace rigide, à chaque fibration en boules ouvertes pointées sur $S$ est associé naturellement un $\O_S^+$-module localement libre de rang fini qui, après application de $-\otimes_{\O_S^+} \O_S$, redonne le fibré conormal à la section de la fibration.
\end{prop}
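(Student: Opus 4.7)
Le plan est d'observer que la description explicite de $\underline{\Aut}(\mathring{\BB}^d)$ rappelée juste au-dessus fournit un morphisme canonique de faisceaux de groupes
\[
d_0 : \underline{\Aut}(\mathring{\BB}^d) \longrightarrow \underline{\GL}_d(\O^+),
\]
donné par la dérivée à l'origine $f \mapsto \big(\partial_{x_j} f_i(0)\big)_{i,j}$, puis de tordre le $\O^+$-module libre de rang $d$ par un cocycle de trivialisation. Le fait que $d_0$ soit un morphisme de groupes résulte directement de la règle de chaîne pour les séries formelles sans terme constant: si $h = f \circ g$, seule la partie linéaire de $f$ évaluée sur la partie linéaire de $g$ contribue à la partie linéaire de $h$, d'où $d_0(h) = d_0(f) \cdot d_0(g)$ comme éléments de $\GL_d(\O^+)$. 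L'absence de terme constant dans $f$ et $g$ est essentielle ici, tant pour donner un sens à la composition que pour annuler les contributions d'ordre supérieur.

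Étant donnée une fibration en boules ouvertes pointées $\pi : B \to S$ de section $e$, on choisit un recouvrement admissible $\{U_i\}_{i \in I}$ de $S$ et des trivialisations $\varphi_i : B|_{U_i} \iso \mathring{\BB}^d_{U_i}$ échangeant $e$ et la section nulle. Les transitions $g_{ij} = \varphi_i \circ \varphi_j^{-1}$ forment un 1-cocycle à valeurs dans $\underline{\Aut}(\mathring{\BB}^d)$, et l'image par $d_0$ fournit un 1-cocycle à valeurs dans $\GL_d(\O^+)$ qui, par recollement, définit un $\O_S^+$-module localement libre $\M$ de rang $d$. L'indépendance vis-à-vis des choix de trivialisations est standard: un changement de trivialisation remplaçant $\varphi_i$ par $h_i \circ \varphi_i$ modifie $g_{ij}$ par un cobord, et $d_0$ envoie les cobords sur des cobords puisque c'est un morphisme de groupes.

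Pour l'identification avec le fibré conormal à $e$, il suffit de remarquer que localement, $e^* \Omega^1_{\mathring{\BB}^d_{U_i}/U_i}$ s'identifie canoniquement au $\O_S$-module $(x_1,\dots,x_d)/(x_1,\dots,x_d)^2$, qui est libre de rang $d$ de base $\overline{x_1},\dots,\overline{x_d}$, et que le tiré-en-arrière par $f \in \underline{\Aut}(\mathring{\BB}^d)(U)$ agit sur ce quotient via la matrice $d_0(f)$ (au plus à une transposition près, selon la convention), car seule la partie linéaire de $f$ survit modulo $(x_1,\dots,x_d)^2$. Le conormal est donc recollé par le même cocycle $(d_0(g_{ij}))$ vu dans $\GL_d(\O_S)$, ce qui fournit l'identification canonique $\M \otimes_{\O_S^+} \O_S \simeq e^* \Omega^1_{B/S}$. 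Le seul point légèrement délicat est la vérification que $d_0$ est bien un morphisme de faisceaux de groupes à valeurs dans $\GL_d(\O^+)$ (et non seulement dans $\GL_d(\O)$), mais cela résulte directement de la description rappelée avant l'énoncé, où l'inversibilité entière de la partie linéaire est déjà imposée.
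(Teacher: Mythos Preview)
Votre preuve est correcte et suit exactement l'approche du papier: celui-ci se contente d'écrire \og On en déduit la proposition suivante\fg{} après avoir donné la description explicite de $\underline{\Aut}(\mathring{\BB}^d)$ et de sa dérivée à l'origine dans $\GL_d(\O^+)$, et vous avez simplement explicité les détails de cette déduction via le cocycle de transitions et l'identification locale avec $(x_1,\dots,x_d)/(x_1,\dots,x_d)^2$.
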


Nous allons maintenant utiliser le résultat suivant de Bartenwerfer (\cite{BartenwerferAnnulation}):
$$
H^1 (\BB^n_K, \O^+)=0.
$$
En d'autres termes, 
$$
\underset{\X\drt \widehat{\A}^n_{\O_K}}{\limi}  H^1 (\X,\O_X) =0
$$
où $\X\drt \widehat{\A}^n_{\O_K}$ parcourt les éclatements formels admissibles de $\widehat{\A}^n_{\O_K}$. Les arguments qui vont suivre utilisent exactement cette annulation qui est essentielle. En d'autres termes le fait que ce groupe de cohomologie soit annulé par une puissance de $p$ (\cite{BartenwerferPresqueAnnulation}) est insuffisant pour nos besoins (cf. également la discussion sur le cas perfectoïde qui suit la proposition \ref{prop:trivialite fibration boule}).

\begin{prop}\label{prop:trivialite fibration boule}
Soit $n\geq 1$ et 
$B\drt \BB^n_K$ une fibration en boules ouvertes pointées au dessus d'une boule fermée de rayon $1$. Sont équivalents:
\begin{enumerate}
\item La fibration $B\drt \BB^n_K$ est triviale.
\item Le $\O^+$-module localement libre associé sur $\BB^n_K$ est trivial. 
\end{enumerate}
\end{prop}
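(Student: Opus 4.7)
The implication (1) $\Rightarrow$ (2) is tautological, so the content is (2) $\Rightarrow$ (1). The plan is to encode $B\drt \BB^n_K$ as a \v{C}ech $1$-cocycle with values in the sheaf of groups $\underline{\Aut}(\mathring{\BB}^d)$ on a cover trivialising the pointed fibration, and to kill this cocycle by successive approximation. The engine will be Bartenwerfer's integral vanishing $H^1(\BB^n_K,\O^+)=0$, applied through the degree filtration on $\underline{\Aut}(\mathring{\BB}^d)$.

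Using the description of $\underline{\Aut}(\mathring{\BB}^d)$ recalled just before the proposition, I introduce the normal subsheaves $\underline{\Aut}_{\geq k}\subset\underline{\Aut}(\mathring{\BB}^d)$ of automorphisms congruent to the identity modulo $(x_1,\dots,x_d)^k$. The quotient $\underline{\Aut}/\underline{\Aut}_{\geq 2}=\underline{\GL_d(\O^+)}$ parametrises the linear part of the cocycle, i.e.\ the isomorphism class of the associated $\O^+$-module, so hypothesis (2) says that, after refining the cover, the cocycle already lies in $\underline{\Aut}_{\geq 2}$. For each $k\geq 2$ the graded piece $\underline{\Aut}_{\geq k}/\underline{\Aut}_{\geq k+1}$ is abelian and isomorphic, as a sheaf of $\O^+$-modules, to the finite free $\O^+$-module of $d$-tuples of degree-$k$ homogeneous polynomials in $d$ variables; Bartenwerfer therefore yields $H^1\bigl(\BB^n_K,\underline{\Aut}_{\geq k}/\underline{\Aut}_{\geq k+1}\bigr)=0$.

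The trivialisation is then built by ascending induction on $k$. Assume the cocycle lies in $\underline{\Aut}_{\geq k}$; its class in the abelian quotient $\underline{\Aut}_{\geq k}/\underline{\Aut}_{\geq k+1}$ is a coboundary by the vanishing above, and lifting this coboundary to sections of $\underline{\Aut}_{\geq k}$ and composing the local trivialisations with these lifts pushes the cocycle into $\underline{\Aut}_{\geq k+1}$. Since the stage-$k$ correction is congruent to the identity modulo $(x_1,\dots,x_d)^k$, the infinite composition of the successive corrections converges in the $(x_1,\dots,x_d)$-adic topology of $\O^+\llbracket x_1,\dots,x_d\rrbracket$ and defines new local trivialisations for which the cocycle is trivial.

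The main obstacle — and the reason the integral statement $H^1(\BB^n_K,\O^+)=0$ is essential, rather than merely the almost-vanishing of \cite{BartenwerferPresqueAnnulation} — is the requirement that each lifted coboundary have coefficients in $\O^+$ and not merely in $\O$. This integrality is exactly what keeps the infinite composition inside $\underline{\Aut}(\mathring{\BB}^d)$, so that one obtains an isomorphism of pointed open-ball fibrations $B\simeq\mathring{\BB}^d_{\BB^n_K}$ and not just of their underlying analytic spaces.
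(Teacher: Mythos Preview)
Your argument is correct and follows the paper's own approach: filter $\underline{\Aut}(\mathring{\BB}^d)$ by order of vanishing, identify the first graded piece with $\GL_d(\O^+)$ and the higher ones with finite free $\O^+$-modules, then kill the \v{C}ech cocycle step by step using Bartenwerfer's vanishing $H^1(\BB^n_K,\O^+)=0$. Your write-up is simply more explicit than the paper's terse proof, and your final remark on why the \emph{integral} vanishing (not merely almost-vanishing) is indispensable matches the paper's own commentary immediately after the proposition.
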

\begin{proof}
Le groupe $\underline{\Aut} (\mathring{\BB}^d)$ est muni d'une filtration $(\Fil^i)_{i\geq 1}$ pour laquelle il est complet et  telle que
$$
\Fil^1 /\Fil^2 \simeq \GL_d (\O^+)
$$
et pour $i\geq 2$
$$ 
\Fil^i / \Fil^{i+1} \simeq (\O^+)^{d^2}.
$$
Le résultat se déduit alors de l'annulation de $H^1 (\BB^n_K, \O^+)$.
\end{proof}

Soit $S=\spa (R,R^+)$ un espace affinoïde perfectoïde. On peut considérer l'espace adique sous-perfectoïde $\mathring{\BB}^d_S$ et donc parler de fibrations en boules ouvertes pointées au dessus de $S$ comme précédemment. Soit $\varpi \in R$ une pseudo-uniformisante et considérons la boule \og fermée\fg{} $\BB_S^d(|\varpi|)=\spa ( R\langle \frac{T}{\varpi}\rangle , R \langle \frac{T}{\varpi}\rangle^+ ) \subset \mathring{\BB}^d_S$. Tout automorphisme pointé de $\mathring{\BB}^d_S$ laisse invariant la boule fermée précédente et induit un automorphisme pointé de $\BB_S^d(|\varpi|)$. On peut alors utiliser la même méthode de preuve que dans la proposition \ref{prop:trivialite fibration boule} couplé à la presque annulation de $H^1(S,\O^+)$. On en déduit que si $B\drt S$ est une fibration en boules ouvertes pointées telle que le $\O^+$-module associé soit trivial alors 
$$
B=\bigcup_{n\geq 1} \BB^d_S ( 0, |\varpi|^{1/p^n})
$$
est une union croissante de fibrations triviales en boules fermées. Néanmoins on ne peut conclure que la fibration en boules ouvertes est triviale en général sauf bien sûr si $S$ est totalement discontinu (\cite{ScholzeCohomologyDiamonds}).

\subsection{Extension de groupes et de morphismes}

\begin{prop}\label{prop:extension groupes sur boule}
Soit $G\drt \BB^n_K$ un groupe analytique rigide $p$-divisible localement isomorphe à $\mathring{\BB}^d$ sur $\BB^n_K$ et tel que le $\O^+$-module localement libre associé sur $\BB^n_K$ soit trivial. Alors $G$ s'étend en un groupe formel $p$-divisible sur $\widehat{\A}^n_{\O_K}$.
\end{prop}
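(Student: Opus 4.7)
The strategy is a direct assembly of the two tools prepared in the section, namely Proposition \ref{prop:trivialite fibration boule} and Theorem \ref{theo:equivalence entre groupes formels rigides et entiers}(2). What has to be verified is that the hypothesis on the $\O^+$-module upgrades the ``local'' triviality $G \simeq \mathring{\BB}^d$ into a genuine global triviality over $\BB^n_K$, and that this global triviality is exactly the input required by the integral equivalence theorem.

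First I would observe that $G \drt \BB^n_K$, equipped with the identity section, is a pointed open ball fibration of relative dimension $d$ in the sense of the previous subsection. By Proposition \ref{prop:equivalence groupes formels et rigides}, this fibration has an associated locally free $\O^+$-module on $\BB^n_K$, which unravels to the one appearing in the hypothesis (its reduction modulo $\O^{\circ\circ}$ is the conormal bundle at the zero section, i.e.\ $(\Lie\, G)^\vee$).

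Next, by hypothesis this $\O^+$-module is free, so Proposition \ref{prop:trivialite fibration boule} applies and produces a global isomorphism of pointed ball fibrations
\[
G \;\simeq\; \mathring{\BB}^d_{\BB^n_K}
\]
over $\BB^n_K$. This is exactly where Bartenwerfer's vanishing $H^1(\BB^n_K,\O^+)=0$ is consumed; no weaker almost-vanishing statement would suffice, which is what makes the argument work specifically for a closed ball base.

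Finally I would apply the second part of Theorem \ref{theo:equivalence entre groupes formels rigides et entiers} to the affine admissible formal scheme $\mathscr{S} = \widehat{\A}^n_{\O_K} = \spf\, R$ with $R = \O_K\langle T_1,\dots,T_n\rangle$, which is normal (as $\O_K$ is a valuation ring). Its generic fibre is $S = \BB^n_K$, and the previous step shows that the rigid analytic $p$-divisible group $G$ over $S$ is isomorphic to $\mathring{\BB}^d_S$ (with, in particular, trivial Lie algebra). The essentially surjective direction of that equivalence then yields a formal $p$-divisible group $\mathscr{G}$ over $\widehat{\A}^n_{\O_K}$ whose rigid generic fibre, together with its group law, is $G$; the fully faithfulness of the generic fibre functor guarantees that this extension is canonical. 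There is no real obstacle beyond correctly identifying the $\O^+$-module of the hypothesis with the one produced by Proposition \ref{prop:equivalence groupes formels et rigides}; once this identification is made, the whole proof is a two-line invocation of the preceding results.
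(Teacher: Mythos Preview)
Your proposal is correct and follows exactly the paper's own approach: the paper's proof is the single sentence ``C'est une cons\'equence des propositions \ref{prop:trivialite fibration boule}, \ref{prop:equivalence groupes formels et rigides} et du th\'eor\`eme \ref{theo:equivalence entre groupes formels rigides et entiers}'', and you have correctly unpacked precisely these three ingredients in the right order. One small slip: the $\O^+$-module of Proposition \ref{prop:equivalence groupes formels et rigides} recovers the conormal bundle after applying $-\otimes_{\O^+}\O$, not after ``reduction modulo $\O^{\circ\circ}$''; this does not affect the argument.
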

\begin{proof}
C'est une conséquence des propositions \ref{prop:trivialite fibration boule}, \ref{prop:equivalence groupes formels et rigides} et du théorème \ref{theo:equivalence entre groupes formels rigides et entiers}.
\end{proof}

La définition \ref{defi:quasimorphismes} des quasimorphismes et de leur classe d'équivalence s'étend aussitôt au cas des groupes analytiques rigides $p$-divisibles sur un espace rigide quasi-compact quasi-séparé.

\begin{prop}\label{prop:extension quasi morphismes sur boule}
Soient $G_1$ et $G_2$ satisfaisant les hypothèses de la proposition \ref{prop:extension groupes sur boule}. Les sections globales du $S$-faisceau $\F$ associé au préfaisceau 
$$
U\mapsto  Q\Hom ( G_{1 |U},G_{2|U}) / \sim \unp
$$
sont
$$
H^0(S,\F) = Q\Hom (G_1,G_2) / \sim \unp .
$$
\end{prop}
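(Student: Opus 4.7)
The natural map $\alpha : Q\Hom(G_1,G_2)/{\sim}\,\unp \to H^0(S,\F)$ sits in a short exact sequence of sheaves on $S$, and I would identify its failure of being an isomorphism with an $H^1$ that vanishes by Bartenwerfer.

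First I would set up the sheaves on the site of quasi-compact admissible opens of $S$: let $\mathcal{Q}$ (resp.\ $\mathcal{B}$) be the sheaf of quasi-morphisms (resp.\ of morphisms of rigid spaces with bounded image) from $G_1$ to $G_2$. Both are genuine sheaves: gluing of morphisms of rigid spaces is standard, and for the bounded ones one uses that on a finite union of quasi-compact opens of $S$ a finite collection of bounded morphisms remains bounded. The presheaf quotient $\mathcal{Q}/\mathcal{B}$ sheafifies, after inverting $p$, to $\F$, yielding a short exact sequence $0\to \mathcal{B}\unp \to \mathcal{Q}\unp \to \F \to 0$. Taking global sections over the quasi-compact space $S = \BB^n_K$ gives
\[
0\to \mathcal{B}(S)\unp \to Q\Hom(G_1,G_2)\unp \to H^0(S,\F) \to H^1(S,\mathcal{B})\unp .
\]
Injectivity of $\alpha$ on the quotient $Q\Hom/{\sim}\,\unp$ is then automatic from quasi-compactness of $S$ (a quasi-morphism whose image is bounded on a finite admissible affinoid cover is globally bounded), and the proposition reduces to the vanishing $H^1(S,\mathcal{B})\unp = 0$.

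To compute $H^1(S,\mathcal{B})$, I would choose a finite admissible cover $\{U_j\}$ of $S$ by affinoids over which both $G_1$ and $G_2$ admit isomorphisms with $\mathring{\BB}^{d_i}_{U_j}$; this is possible by the hypothesis that the associated $\O^+$-modules are trivial, combined with Proposition~\ref{prop:trivialite fibration boule}. Under such trivializations, a bounded morphism $G_{1|U_j}\to G_{2|U_j}$ with image in a closed ball of radius $r<1$ is a $d_2$-tuple of power series $\sum_I a_I y^I$ with $a_I \in r\,\O^+(U_j)$, so $\mathcal{B}|_{U_j}$ is an increasing union, over radii $r<1$, of sheaves of $\O^+$-modules. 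The \v{C}ech computation of $\check H^1(\{U_j\}, \mathcal{B})$ then reduces, via the filtration by degree already used in the proof of Proposition~\ref{prop:trivialite fibration boule}, to successive instances of Bartenwerfer's vanishing $H^1(\BB^n_K, \O^+) = 0$. After inverting $p$ one concludes $H^1(S,\mathcal{B})\unp = 0$, as desired.

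The main obstacle is this final step. The transition functions between local trivializations of $G_1$ and $G_2$ on overlaps are elements of $\underline{\Aut}(\mathring{\BB}^{d_i})$, which act nonlinearly on power series, so one must peel off the successive graded pieces of the filtration (linear $\GL_d(\O^+)$ piece and higher additive $\O^+$ pieces) and apply Bartenwerfer at each stage, solving the cocycle condition up to the next layer. It is essential, exactly as the paper emphasizes just before Proposition~\ref{prop:trivialite fibration boule}, that the full vanishing of \cite{BartenwerferAnnulation} is used; the presque-annulation of \cite{BartenwerferPresqueAnnulation} would only give the result up to a universal bounded $p$-power, which inversion of $p$ does not repair in the present non-perfectoid setting.
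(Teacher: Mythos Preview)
Your approach is correct and essentially matches the paper's: one writes $\F$ as the sheafification of $\mathscr{G}\unp/\mathscr{H}\unp$ with $\mathscr{H}$ the sheaf of bounded pointed morphisms, and reduces surjectivity of $\alpha$ to $H^1(S,\mathscr{H}\unp)=0$, proved by filtering $\mathscr{H}$ and invoking Bartenwerfer on each graded piece.

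The one simplification you miss is that the hypotheses of Proposition~\ref{prop:extension groupes sur boule} (the associated $\O^+$-modules on $\BB^n_K$ are \emph{globally} trivial) together with Proposition~\ref{prop:trivialite fibration boule} give \emph{global} trivializations $\mathring{\BB}^{d_i}_S\iso G_i$ over $S=\BB^n_K$, not merely local ones. The paper exploits this: under such trivializations $\mathscr{H}\unp$ is the $p$-localization of the sheaf $\big((px_1,\dots,px_{d_2})\,\O_S^+\llbracket px_1,\dots,px_{d_2}\rrbracket\big)^{d_1}$, whose filtration by order has graded pieces $((\O_S^+)^{d_1d_2},+)$, and Bartenwerfer kills $H^1$ of each piece directly. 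Your detour through a \v{C}ech cover and nonlinear transition functions in $\underline{\Aut}(\mathring{\BB}^{d_i})$ is therefore unnecessary here; it would work, but the ``main obstacle'' you flag does not actually arise.
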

\begin{proof}
Notons $S=\BB^n_K$.
On a $\F=\mathscr{G}\unp /\mathscr{H}\unp $ où $\mathscr{G}$ est le $S$-faisceau des morphismes
$f:G_1\drt G_2$ vérifiant $f(0)=0$ et tels que $f(x+y)-f(x)-f(y)$ soit \og d'image bornée\fg{} et $\mathscr{H}$ est le sous-faisceau des morphismes \og d'image bornée \fg{} de $G_1$ vers $G_2$. Fixons des trivialisations $\mathring{\BB}^{d_1}_S\iso G_1$ et $\mathring{\BB}^{d_2}_S\iso G_2$. Dès lors $\mathscr{H}\unp$ est obtenu par localisation par inversion de $p$ à partir du faisceau 
$$
\big ( (p x_1,\dots,p x_{d_2}) \O_S^+\llbracket px_1,\dots, px_{d_2} \rrbracket \big )^{d_1}
$$ 
où $x_1,\dots, x_{d_2}$ sont les coordonnées sur $\mathring{\BB}^{d_2}_S$ et la loi de groupe est déduite de celle de $G_2$. Ce faisceau possède une filtration $(\Fil^i)_{i\geq 1}$, pour laquelle il est complet de gradués isomorphes à 
$$
\big ( (\O_S^+)^{d_1 d_2}, + \big ).
$$
On peut alors de nouveau appliquer le théorème de Bartenwerfer pour conclure que 
$H^1 (S, \mathscr{H}\unp)=0$. Cela démontre le résultat.
\end{proof}

\subsection{Minimalité des modèles entiers de Rapoport-Zink}

Supposons $K$ de valuation discrète 
Soit $\X$  un $\O_K$-schéma formel localement formellement de type fini formellement lise. Notons $\X_\eta$ sa fibre générique munie de son morphisme de spécialisation
$$
\sp: |\X_\eta| \twoheadrightarrow |\X|
$$
 On a alors (\cite{deJong1} théo. 7.4.1)
$$
\X= ( |\X |, \sp_* \O_{\X_{\eta}}^+).
$$
Si $\E$ est un fibré vectoriel sur $\X$ on note de façon abusive
$$
\E\otimes_{\O_{\X}} \O_{\X_\eta}^+:= \sp^{-1}\E\otimes_{\sp^{-1} \O_{\X}} \O_{\X_\eta}^+.
$$
Les foncteurs $(-)\otimes_{\O_X}\O_{\X_\eta}^+$ et $\sp_*$ induisent alors des équivalences entre
\begin{enumerate}
\item Les fibrés vectoriels sur $\X$
\item Les $\O_{\X_\eta}^+$-modules localement libres sur $|\X|$ i.e. libres au dessus d'un recouvrement ouvert de $\X_\eta$ image réciproque par l'application de spécialisation $\sp$ d'un recouvrement ouvert de $\X$.
\end{enumerate}
\

Soit maintenant $\mathbb{H}$ un groupe $p$-divisible sur $\Fpb$. Considérons l'espace de Rapoport-Zink (\cite{RZ})
$$
\M
$$
des déformations par quasi-isogénies de $\mathbb{H}$. Il s'agit d'un $\spf ( \breve{\Z}_p)$-schéma formel localement formellement de type fini formellement lisse.
\\

Nous aurons besoin du dévissage suivant au cas des groupes formels.
Il s'agit d'un énoncé du type \og coordonnées canoniques de Serre-Tate\fg{} (\cite{KatzCoordonnees}, \cite{DeligneCoordonnees}, \cite{Har4} pour le cas des espaces de Lubin-Tate). 
 Notons $\mathbb{H}^0$ la composante connexe neutre de $\mathbb{H}$ et $\M_0$ l'espace de Rapoport-Zink associé. On note $H_0\drt \M_0$ le groupe formel $p$-divisible universel au dessus de $\M_0$. On le voit comme un $\M$-schéma formel localement isomorphe à $\M\times \spf (\breve{\Z}_p \llbracket x_1,\dots, x_d\rrbracket )$.

\begin{prop}\label{prop:devissage connexe}
Notons $h$ la hauteur de la partie étale de $\mathbb{H}$.
Il y a un isomorphisme
$$
\M \simeq H_0^h\underset{GL_h (\Zp)}{\times} \GL_h (\Qp). 
$$
\end{prop}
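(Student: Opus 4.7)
The plan is to follow the Serre--Tate strategy: decompose each deformation via its connected--\'etale sequence, parametrize each piece separately, and encode the extension datum as a point of a formal $p$-divisible group. Over the perfect field $\Fpb$ we have $\mathbb{H}=\mathbb{H}^0\oplus \mathbb{H}^{\et}$ with $\mathbb{H}^{\et}$ \'etale of height $h$. For a deformation $(H,\rho)\in \M(R)$ with $R$ complete local noeth\'erien of residue field $\Fpb$, the connected--\'etale sequence of $H$ reads $0\to H^0\to H\to H^{\et}\to 0$; since quasi-isogenies of $p$-divisible groups over a perfect field preserve the connected--\'etale decomposition, on the special fibre $\rho$ splits as $\rho^0\oplus \rho^{\et}$. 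The pair $(H^0,\rho^0)$ defines an $R$-point of $\M_0$, while $H^{\et}$ is a constant \'etale $p$-divisible group whose Tate module is identified by $\rho^{\et}$ with a lattice $\Lambda\subset V_p(\mathbb{H}^{\et})\simeq \Qp^h$; after fixing once and for all a basis of $T_p(\mathbb{H}^{\et})$, this amounts to a coset in $\GL_h(\Qp)/\GL_h(\Zp)$.

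The remaining datum is the class of the extension $0\to H^0\to H\to H^{\et}\to 0$. The key input will be a Serre--Tate type classification identifying the group of extensions of the constant \'etale group $\underline{(\Qp/\Zp)^h}$ by the universal formal group $H_0\to \M_0$ with the formal scheme $H_0^h$ over $\M_0$, the identification being obtained by choosing compatible lifts of the $p^n$-torsion of the \'etale part to the $p^n$-torsion of the extension and extracting the resulting $h$-tuple of sections of $H_0$. Combining this with the parametrizations above yields a morphism
$$\M\longrightarrow H_0^h\underset{\GL_h(\Zp)}{\times}\GL_h(\Qp),$$
where $\GL_h(\Zp)$ acts on $H_0^h=\Homf(\Zp^h,H_0)$ by precomposition (absorbing the choice of basis of $T_p(\mathbb{H}^{\et})$) and on $\GL_h(\Qp)$ by right translation (absorbing the choice of basis of $\Lambda$). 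An inverse is given by the explicit reverse construction: given $(P,g)$, form the \'etale group attached to the lattice $g\Zp^h$, extend it by the universal $H_0$ via the class encoded by $P$, and equip the result with the tautological quasi-isog\'enie.

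The main obstacle will be to set up the Serre--Tate classification of extensions in families over the formal scheme $\M_0$ and to verify that the quasi-isog\'enie datum of the Rapoport--Zink moduli problem is faithfully encoded by the resulting point of the contracted product. A secondary technical point will be to globalize the construction on the (a priori disconnected) $\M$ and to check that the morphism just described and its candidate inverse really are mutually inverse as morphisms of formal $\breve{\Z}_p$-schemes.
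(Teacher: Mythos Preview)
Your proposal is correct and follows essentially the same route as the paper. The paper's proof is a one-paragraph affair: it cites the result (Chai, \emph{Extensions}, Prop.~2.9) that for $H_1$ formal and $H_2$ \'etale over a base killed by a power of $p$, extensions are rigid and $\underline{Ext}(H_2,H_1)\simeq T_p(H_2)^*\otimes_{\Zp} H_1$, with every extension obtained by pushforward from $0\to \underline{T_p(H_2)}\to \underline{V_p(H_2)}\to H_2\to 0$ along a map $\underline{T_p(H_2)}\to H_1$; this is exactly the ``Serre--Tate type classification'' you invoke for the extension datum, so the technical worry you flag in your last paragraph is handled by that citation.
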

\begin{proof}
C'est une conséquence du fait suivant (\cite{ChaiExtensions} prop. 2.9). Soit $H_1$ un groupe $p$-divisible formel sur un schéma $S$ annulé par une puissance de $p$ et $H_2$ un groupe $p$-divisible étale sur $S$. Les extensions de $H_2$ par $H_1$ sont rigides et on peut regarder le faisceau fppf des extensions $\underline{Ext} (H_2, H_1)\drt S$. Il est représenté par 
$$
\underline{Ext} (H_2,H_1) \simeq T_p (H_2)^*\otimes_{\Zp} H_1.
$$
Plus précisément, toute extension est obtenue par poussé en avant via un morphisme $\underline{T_p(H_2)}\drt H_1$ à partir de l'extension de faisceaux pro-étales
$$
0 \ldrt \underline{T_p(H_2)}\ldrt \underline{V_p(H_2)}\ldrt H_2\ldrt 0.
$$
\end{proof}

\begin{theo}\label{theo:minimal}
Soit $\X$ un $\spf (\breve{\Z}_p)$-schéma formel localement formellement de type fini et formellement lisse. Un morphisme
$$
f:\X_\eta\drt \M_\eta
$$
 s'étend en  un morphisme
$$
\X\drt \M
$$ 
si et seulement si le $\O_{\X_\eta}^+$-module $f^*\big ( \Lie (H)\otimes_{\O_{\M}} \O_{\M_\eta}^+\big )$ est localement libre sur $|\X|$. 
\end{theo}
\begin{proof}
Dans cette preuve on considère les fibres génériques commes des espaces de Berkovich.
Commençons par dévisser le résultat au cas où $\mathbb{H}$ est connexe grâce à la proposition \ref{prop:devissage connexe} dont on reprend les notations. Puisque $\X$ est formellement lisse,
$$
\pi_0 (\X_\eta) = \pi_0(\X).
$$
On peut donc supposer donné un morphisme 
$$
\X_\eta \ldrt H_{0,\eta}\ldrt \M_{0,\eta}
$$
Supposons donc démontré que le composé $\X_\eta\drt \M_{0,\eta}$ s'étend en un morphisme 
$\X\drt \M_{0}$. Le résultat se déduit alors du fait que localement sur $\M_{0}$, $H_0\simeq \M_0\times \spf (\breve{\Z}_p \llbracket x_1,\dots,x_d \rrbracket )$. 
\\

On suppose donc $\mathbb{H}$ connexe.
Soit $x\in \X_\eta$ de corps résiduel (complété) $k(x)$. Notons $K=k(x)$. On a alors, si $\sp:|\X_\eta|\drt |\X|$ et $\sp_x: \X_\eta \hat{\otimes} k(x)\drt |\X\hat{\otimes} \O_{k(x)}|$,  
$$
\mathring{\BB}^n_K \simeq \sp_x^{-1} (\sp_x (x))\twoheadrightarrow \sp^{-1} (\sp (x))
$$ 
où le membre de droite est un sous-ensemble de $|\X_\eta|$ et celui de gauche un $K$-espace de Berkovich. Montrons que l'application composée 
$$
\sp^{-1} (sp(x)) \hookrightarrow |\X_\eta| \xrig{\ f\ } |\mathfrak{\M}_\eta |\xrig{\ \sp\ } |\mathfrak{\M}|
$$
est constante. Quitte à étendre $K$, auquel cas $|\mathring{\BB}^n_K| \drt \sp^{-1}(\sp(x))$ est encore surjectif, on peut supposer que $|K^\times|$ contient $p^{\Q}$. Soient $0<\rho_1<\rho_2<1$ avec $\rho_1,\rho_2\in |K^\times|$ deux rayons. D'après le théorème  \ref{theo:equivalence entre groupes formels rigides et entiers}, les propositions  \ref{prop:extension groupes sur boule} et \ref{prop:extension quasi morphismes sur boule} le diagramme
$$
\BB^n_K(\rho_1)\hookrightarrow \BB^n_K (\rho_2) \ldrt \M_\eta
$$
s'étend en un diagramme
$$
\widehat{\A}^n_{\O_K} (\rho_1) \drt \widehat{\A}^n_{\O_K} (\rho_2) \ldrt \M 
$$
où si $\rho= |a|$ par définition $\widehat{\A}^n_{\O_K} (\rho):= \spf (\O_K\langle \frac{T}{a}\rangle )$. Le morphisme $$\widehat{\A}^n_{\O_K} (\rho_1) \drt \widehat{\A}^n_{\O_K} (\rho_2)$$ a pour image l'origine de $\A^n$ en fibre spéciale. En faisant varier les rayons $\rho_1$ et $\rho_2$ on en déduit la constance de l'application cherchée.

On a donc démontré l'existence d'une application $g:|\X|\drt |\M|$ s'inscrivant dans un diagramme
$$
\begin{tikzcd}
 {|}\X_\eta | \ar[r, "|f|"]\ar[d, twoheadrightarrow, "\sp"] & {|} \M_\eta | \ar[d, twoheadrightarrow , "\sp"] \\
{|}\X|\ar[r, "g"] & {|}\M|.
\end{tikzcd}
$$
Montrons que cette application $g$ est continue. Soit $\omega= (\det \Lie H)^*$ comme fibré $J(\Qp)$-équivariant sur le schéma formel $\M$. Soit $\mathcal{V}\subset \M$ un ouvert quasi-compact. D'après le théorème d'uniformisation de Rapoport-Zink (\cite{RZ} théo. 6.24) couplé au même type de résultat pour les variétés de Shimura, la restriction de $\omega$ à toute composante irréductible de $\M_{red}$ est ample. 

Il existe un sous-groupe discret cocompact $\Gamma\subset J(\Qp)$, agissant sans point fixes sur $\M$, tel que 
\begin{itemize}
\item 
toute composante irréductible de $\M_{red}$ s'injecte dans $\GG \bc \M_{red}$
\item 
$\mathcal{V}\subset \GG\bc \M$ 
\end{itemize}
Puisque $\omega$ restreint à toutes les composantes irréductibles du schéma de type fini $\GG\bc \M_{red}$ est ample, $\omega$ est ample sur $\GG\bc \M_{red}$. On en déduit l'existence de $N\gg 0$ et d'une famille finie de sections $(s_i)_i$, $s_i\in H^0(\M,\omega^{\otimes N})^{\Gamma}$, telle que 
$$
\coprod_{\gamma\in \Gamma} \gamma.\mathcal{V} = \bigcup_i \{s_i\neq 0\}.
$$
Par hypothèse,
le $\O^+$-module localement libre $f^* (\omega^{\otimes N}\otimes_{\O_\M} \O_{\M_\eta}^+)$ 
s'étend canoniquement en un fibré en droites 
$$
\L:=\sp_* f^* (\omega^{\otimes N}\otimes_{\O_\M} \O_{\M_\eta}^+)
$$
sur $\X$. De même, la famille de sections $(f^* (s_i\otimes 1))_i$  s'étend en une famille de sections $(t_i)_i$ de $\L$. Considérons l'ouvert
$$
\mathcal{U}=\bigcup_i \{t_i\neq 0\}\subset \X.
$$
Par lissité formelle de $\X$, 
$$
\pi_0 (\mathcal{U}_\eta)= \pi_0(\mathcal{U}).
$$
On en déduit l'existence d'un ouvert/fermé $\mathcal{U}_0\subset \mathcal{U}$ tel que 
$$
g^{-1}(\mathcal{V})=\mathcal{U}_0.
$$
Cela conclut quant à la continuité de $g$. 
\end{proof}

\begin{exem}
Dans le théorème \ref{theo:minimal} supposons $\X$ $p$-adique quasi-compact lisse. Un morphisme 
$\X_\eta \drt \M_\eta$ est induit par un morphisme $$f:\widetilde{\X}\drt \M$$ où $\tilde{\X}\drt \X$ est un éclatement formel admissible au sens de Raynaud et on peut supposer $\widetilde{\X}$ normal. Le théorème \ref{theo:minimal} dit alors que  $f$ descend en un morphisme $\X\drt \M$ si et seulement si le fibré vectoriel $f^*\Lie H$ descend en un fibré vectoriel sur $\X$.
\end{exem}

\begin{rema}\label{rema:avant}
Le $\O^+$-module localement libre $\E^+:=\Lie (H)\otimes_{\O_{\M}} \O_{\M_\eta}^+$ est un objet subtile qui ne peut être retrouvé à partir des applications de périodes en fibré générique. En effet, si $\pi_{dR}:\M_\eta\drt \mathcal{F}$ est l'application des périodes de Hodge de-Rham alors $\E^+$ ne descend pas le long de $\pi_{dR}$ car il n'est pas Hecke équivariant contrairement à $\E^+\unp$. De même, on a une application de périodes de Hodge-Tate sur $\M_{\eta}$, un morphisme de $\O^+$-modules pro-étales $\a_{H^D}\otimes 1:T_p (H^{D})\otimes_{\Zp} \O_{\M_{\eta}}^+ \drt \E^{ +\vee}$ dont le conoyau est annulé par $p^{1/(p-1)}$. Mais cette application n'est pas surjective en générale et ne permet pas non plus de retrouver $\E^+$.
\end{rema}

\begin{rema}[suite de \ref{rema:avant}]\label{rema:avant 2}
Le choix d'un autre $\O^+$-module localement libre sur $\M_\eta$ que $\E^+$ dans $\E^+\unp$
\og donne lieu à d'autre modèles entiers\fg{} non formellement lisses. C'est par exemple le cas dans \cite{LivreIso} (sec. III.2.5, III.4.2 par exemple) en prenant l'image de $\a_{H^D}\otimes 1$ en niveau infini. De même, dans \cite{ScholzeTorsion} Scholze construit des modèles entiers de certaines variétés de Shimura en utilisant ce type d'éclatements/contractions.  
\end{rema}

Le corollaire suivant du théorème \ref{theo:minimal} mérite d'être cité. Il montre qu'on peut reconstruire le modèle entier $\M$ à partir de sa fibre générique $\M_\eta$ et d'une donnée \og linéaire\fg{}. Il s'agit essentiellement d'une traduction plus \og concrète\fg{} de \ref{theo:minimal}.

\begin{coro}\label{coro:caracterisation concrete}
Notons $\E=(\Lie\, H)^{rig}$ comme fibré vectoriel sur $\M_\eta$. Pour tout $K|\breve{\Q}_p$ de degré fini et $x\in \M_\eta (K)=\M(\O_K)$ notons $\La_x:=x^*\Lie\, H\subset x^*\E$ comme $\O_K$-réseau dans $x^* \E$. 
\begin{enumerate}
\item 
Le triplet 
$$
\big (\M_\eta, \E, (\La_x)_x \big )
$$
détermine complètement le modèle entier $\M$ de $\M_\eta$.
\item Si $\X$ est un morphisme $\breve{\Z}_p$-schéma formel localement formellement de type fini formellement lisse un morphisme $f:\X_\eta\drt \M_\eta$
s'étend en un morphisme $\X\drt \M$ si et seulement si $f^*\E$ est localement libre sur $|\X|$ et il existe des trivialisations locales sur des ouverts $\mathcal{U}$ recouvrant $\X$,
$$
u:\O_{\mathcal{U}_\eta}^m \iso (f^*\E)_{|\mathcal{U}_\eta},
$$
telles que pour tout $K|\breve{\Q}_p$ de degré fini, tout $x\in \mathcal{U}_\eta (K)=\mathcal{U}(\O_K)$,
$$
x^* u: \O_K^m\iso \La_{f(x)}.
$$
\end{enumerate}
\end{coro}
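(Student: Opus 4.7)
La première assertion découle formellement de la seconde : si $\M$ et $\M'$ étaient deux $\spf(\breve{\Z}_p)$-schémas formels modèles entiers de $\M_\eta$ portant le même triplet $(\M_\eta, \E, (\La_x)_x)$, alors le critère (2) caractériserait, pour tout schéma formel $\X$ localement formellement de type fini formellement lisse, l'ensemble des morphismes $f:\X_\eta \drt \M_\eta$ s'étendant à $\X$ uniquement en termes de ce triplet. Appliqué à $\X = \M$ et $\X = \M'$, cela produit des morphismes $\M \drt \M'$ et $\M' \drt \M$ induisant chacun l'identité sur $\M_\eta$, donc mutuellement inverses, d'où $\M \simeq \M'$.

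\textbf{Sens direct de (2).} Supposons que $f$ s'étende en $F : \X \drt \M$. Le fibré vectoriel $F^*\Lie\, H$ sur $\X$ est localement libre, donc on peut choisir un recouvrement ouvert $\{\mathcal{U}\}$ de $\X$ et des bases $v_1, \dots, v_m$ de $F^*\Lie\, H$ sur chaque $\mathcal{U}$. Via l'identification $(f^*\E^+)_{|\mathcal{U}_\eta} = (F^*\Lie\, H) \otimes_{\O_\X} \O_{\mathcal{U}_\eta}^+$, les $v_i$ donnent une base de $(f^*\E^+)_{|\mathcal{U}_\eta}$ ; la trivialisation induite $u : \O^m_{\mathcal{U}_\eta} \iso (f^*\E)_{|\mathcal{U}_\eta}$ vérifie, en tout $x \in \mathcal{U}(\O_K) = \mathcal{U}_\eta(K)$, que $x^*u$ identifie $\O_K^m$ avec la base $x^*v_1, \dots, x^*v_m$ de $(F\circ x)^*\Lie\, H = \La_{f(x)}$, d'où la propriété souhaitée.

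\textbf{Sens réciproque de (2).} Étant données de telles trivialisations locales $u$ sur un recouvrement $\{\mathcal{U}\}$ de $\X$, on va montrer que $f^*\E^+$ est localement libre sur $|\X|$, après quoi le théorème \ref{theo:minimal} fournit l'extension. Soit $M \subset (f^*\E)_{|\mathcal{U}_\eta}$ le sous-$\O^+_{\mathcal{U}_\eta}$-module engendré par $u(e_1), \dots, u(e_m)$ ; il est libre de rang $m$. En tout point classique $x \in \mathcal{U}_\eta(K) = \mathcal{U}(\O_K)$, $x^*M$ coïncide avec l'image de $\O_K^m$ par $x^*u$, qui par hypothèse est $\La_{f(x)}$, elle-même égale à $x^*(f^*\E^+)$ par définition de $\E^+ = \Lie\, H \otimes_{\O_\M} \O_{\M_\eta}^+$ et de $\La_x$. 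L'assertion clé est alors que $M = (f^*\E^+)_{|\mathcal{U}_\eta}$ comme sous-$\O^+_{\mathcal{U}_\eta}$-modules de $f^*\E$ ; admise cette égalité, $M$ descend localement en le module libre $\O_\mathcal{U}^m$ sur $\mathcal{U}$ au sens de l'équivalence entre fibrés sur $\X$ et $\O_{\X_\eta}^+$-modules localement libres sur $|\X|$, donc $f^*\E^+$ est localement libre sur $|\X|$ et le théorème \ref{theo:minimal} s'applique.

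\textbf{Obstacle principal.} Le point substantiel est l'identification $M = (f^*\E^+)_{|\mathcal{U}_\eta}$ à partir de leur coïncidence en tout point classique. C'est un énoncé de rigidité pour les $\O^+$-réseaux à l'intérieur d'un fibré vectoriel sur la fibre générique d'un schéma formel formellement lisse : l'intégralité d'une section doit être détectée par ses évaluations en tous les points classiques. Concrètement, en travaillant dans la base $u$ et en considérant la matrice de changement de base entre $M$ et $f^*\E^+$ (qui a un sens après inversion de $p$), cela se ramène au fait standard que sur un affinoïde réduit sur $\breve{\Q}_p$ provenant d'un modèle formel admissible normal -- disponible ici par lissité formelle de $\X$ -- une fonction analytique appartient à $\O^+$ si et seulement si sa valeur en tout point classique est de valeur absolue $\leq 1$. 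C'est à cet endroit que la normalité, conséquence de la lissité formelle de $\X$, est utilisée de façon essentielle.
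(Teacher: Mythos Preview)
The paper does not give a proof of this corollary; it is presented as ``essentiellement une traduction plus concr\`ete'' of th\'eor\`eme~\ref{theo:minimal}, with no further argument. Your proof correctly supplies the details the paper omits, and the strategy --- reduce part~(2) to th\'eor\`eme~\ref{theo:minimal} by showing that the pointwise lattice condition forces $f^*\E^+$ to be locally free on $|\X|$, then deduce~(1) from~(2) by a universal-property argument --- is exactly the intended one.

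The substantive step you isolate, namely that the free $\O^+$-submodule $M$ generated by $u(e_1),\dots,u(e_m)$ coincides with $(f^*\E^+)_{|\mathcal{U}_\eta}$ because they agree at every classical point, is correct and is indeed the heart of the matter. One small remark: you invoke normality of the formal model for the characterization of $\O^+$ via classical points, but this is not needed here. For any reduced affinoid $A$ over a complete discretely valued field, the supremum seminorm is computed on the maximal spectrum, so $A^\circ=\{f:|f(x)|\leq 1\text{ for all classical }x\}$; since $\X$ is formally smooth, $\X_\eta$ is smooth and in particular reduced, which suffices. Normality of $\X$ is used elsewhere (in the equivalence between vector bundles on $\X$ and $\O^+$-modules locally free on $|\X|$), but not at this precise point.
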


Bien sûr le théorème \ref{theo:minimal} s'étend immédiatement aux espaces de Rapoport-Zink associés à une donnée de type PEL non-ramifiée (\cite{RZ}). Plus généralement, soit $(G,b,\mu)$ une donnée de Shimura locale (\cite{RapoportViehmann}) non-ramifiée. On sait maintenant construire (\cite{ScholzeBerkeley}), pour le choix d'un sous-groupe compact hyperspécial $K\subset G(\Qp)$, un $\breve{\Q}_p$-espace rigide
$$
\mathrm{Sh}_K(G,b,\mu).
$$
Conjecturalement 
$$
\mathrm{Sh}_K(G,b,\mu) = \M_\eta
$$
où $\M$ est un $\breve{\Z}_p$-schéma formel localement formellement de type fini formellement lisse \og naturellement défini\fg{} à partir du modèle entier de $G$ associé à $K$. Une approche à ce problème est par exemple donnée dans \cite{BueltelPappas}.

\begin{conj}
Sous les hypothèses précédentes, il existe un $\O^+$-module localement libre de rang fini sur $\mathrm{Sh}_K(G,b,\mu)$ tel que la conclusion du théorème \ref{theo:minimal} s'applique.
\end{conj}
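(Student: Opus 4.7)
L'approche naturelle consiste à adapter la construction du cas PEL non ramifié en remplaçant le groupe $p$-divisible universel par la donnée tannakienne d'un $\mathcal{G}$-torseur sur $\mathrm{Sh}_K(G,b,\mu)$. Soit $\mathcal{G}/\Zp$ le modèle réductif hyperspécial de $G$ associé à $K$. Suivant \cite{ScholzeBerkeley}, $\mathrm{Sh}_K(G,b,\mu)$ porte un $\mathcal{G}$-torseur universel $\mathcal{P}$ (provenant de la réalisation de de Rham du $\mathcal{G}$-shtuka universel) dont on veut exploiter une structure intégrale pour définir $\E^+$.

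Plus concrètement, on fixe une représentation fidèle $\rho:\mathcal{G}\hookrightarrow \GL (V)$ avec $V$ un $\Zp$-module libre de rang fini. Après extension à $\breve{\Z}_p$, le cocaractère $\mu$ définit sur $V_{\breve{\Z}_p}$ une filtration décroissante $\Fil^\bullet$ par des sous-$\breve{\Z}_p$-réseaux. Via la réalisation de Hodge de Rham du $\mathcal{G}$-torseur universel on pose
$$
\E_V := \big ( \rho_* \mathcal{P} \otimes \O_{\mathrm{Sh}_K(G,b,\mu)} \big ) / \Fil^1,
$$
un fibré vectoriel sur $\mathrm{Sh}_K(G,b,\mu)$. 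Le $\O^+$-réseau $\E^+\subset \E_V$ candidat est déterminé par le $\breve{\Z}_p$-réseau $V_{\breve{\Z}_p}/\Fil^1 V_{\breve{\Z}_p}$ via la trivialisation entière fournie par $\mathcal{P}$. Dans le cas PEL non ramifié, en prenant pour $\rho$ la représentation standard adéquate, on retrouve $\Lie (H)\otimes_{\O_{\M}} \O_{\M_\eta}^+$, ce qui assure la cohérence avec le théorème \ref{theo:minimal}; l'indépendance vis-à-vis de $\rho$ se contrôle par un argument tannakien standard.

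La vérification de la conclusion du théorème \ref{theo:minimal} suit alors le même schéma : si $f:\X_\eta\drt \mathrm{Sh}_K(G,b,\mu)$ vérifie que $f^*\E^+$ est localement libre sur $|\X|$, on démontre qu'au dessus d'une boule fermée $\BB^n_K$ la trivialité de $f^*\E^+$ entraîne celle du $\mathcal{G}$-torseur associé, donc l'extension de $f$ au schéma formel $\widehat{\A}^n_{\O_K}$. Ceci repose sur les analogues $\mathcal{G}$-équivariants des propositions \ref{prop:trivialite fibration boule}, \ref{prop:extension groupes sur boule} et \ref{prop:extension quasi morphismes sur boule}, qui à leur tour reposent tous sur l'annulation de Bartenwerfer $H^1 (\BB^n_K, \O^+)=0$, appliquée aux gradués d'une filtration convenable sur $\mathcal{G}$. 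Les étapes restantes --- constance sur les fibres de Milnor, factorisation topologique $|\X|\drt |\M|$, puis continuité via l'amplitude d'un fibré déterminant sur les composantes irréductibles de $\M_{red}$ --- se transposent directement.

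La difficulté principale est en amont et de nature fondationnelle. Elle consiste à disposer d'un cadre rigoureux pour la notion de \og $\mathcal{G}$-torseur analytique rigide $p$-divisible borné par $\mu$ \fg{} dans lequel l'analogue du théorème \ref{theo:equivalence entre groupes formels rigides et entiers} soit valable, ainsi qu'à construire (ou admettre l'existence du) modèle entier $\M$ lui-même (cf. \cite{BueltelPappas}). Une fois ces préalables acquis, les arguments développés dans la preuve de \ref{theo:minimal} s'adaptent sans modification substantielle.
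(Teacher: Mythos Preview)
L'\'enonc\'e en question est une \emph{conjecture} dans l'article, pas un th\'eor\`eme~: l'auteur ne fournit aucune d\'emonstration, et pr\'ecise m\^eme dans la remarque qui suit qu'il ne sait pas \'etendre le th\'eor\`eme~\ref{theo:minimal} au-del\`a du cas hypersp\'ecial PEL. Il n'y a donc pas de preuve de r\'ef\'erence \`a laquelle comparer votre proposition.

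Votre texte n'est pas non plus une preuve, mais une esquisse de strat\'egie~--- vous le reconnaissez d'ailleurs explicitement dans le dernier paragraphe. Les lacunes sont r\'eelles et non de simples d\'etails \`a remplir. Premi\`erement, la construction de $\E^+$ suppose acquise une \og trivialisation enti\`ere fournie par $\mathcal{P}$\fg{}, alors que c'est pr\'ecis\'ement l'existence d'une structure $\O^+$ canonique sur $\rho_*\mathcal{P}$ qui est en jeu~; le $\mathcal{G}$-torseur de de Rham vit a priori sur $\O_{\mathrm{Sh}_K(G,b,\mu)}$, pas sur $\O^+$, et c'est justement ce raffinement entier que la conjecture postule (cf.~remarques~\ref{rema:avant} et~\ref{rema:avant 2}). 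Deuxi\`emement, les \og analogues $\mathcal{G}$-\'equivariants\fg{} des propositions~\ref{prop:extension groupes sur boule} et~\ref{prop:extension quasi morphismes sur boule} ne d\'ecoulent pas de Bartenwerfer seul~: dans le cas PEL on utilise de fa\c{c}on essentielle le th\'eor\`eme~\ref{theo:equivalence entre groupes formels rigides et entiers}, dont l'analogue pour des $\mathcal{G}$-torseurs n'existe pas. Troisi\`emement, l'argument de continuit\'e via l'amplitude de $\omega$ sur les composantes de $\M_{red}$ pr\'esuppose l'existence et des propri\'et\'es fines du mod\`ele entier $\M$, qui sont elles-m\^emes conjecturales hors du cadre PEL.

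En r\'esum\'e~: votre texte d\'ecrit une heuristique raisonnable de ce \`a quoi une preuve pourrait ressembler, mais il ne constitue pas une preuve et l'article n'en propose pas non plus~--- c'est une conjecture ouverte.
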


\begin{rema}
L'auteur ne sait malheureusement pas étendre le théorème \ref{theo:minimal} à des cas de variétés de Shimura locales en niveau parahorique non hyperspécial. Dans ces cas là, lorsqu'on sait les construire, les modèles entiers construits sont Cohen-Macauley  normaux. La géométrie des fibres de Milnor est alors plus compliquée et l'auteur ne connait pas d'analogue du théorème de Bartenwerfer qui permettrait de conclure.
\end{rema}

L'auteur ne sait pas répondre à la question suivante qui lui semble intéressante quant à la caractérisation des modèles entiers \og canoniques\fg{} des variétés de Shimura.

\begin{question}
Est-il possible d'établir un résultat du type \ref{theo:minimal} pour des variétés de Shimura de type PEL en niveau hyperspécial en $p$ ?
\end{question}

Enfin, notons le résultat suivant qui se déduit de la démonstration du théorème \ref{theo:minimal}.

\begin{theo}\label{theo:description des points}
Soit $\M$ un espace de Rapoport-Zink de type PEL non-ramifié sur $\spf (\breve{\Z}_p)$.
\begin{enumerate}
\item Pour un $n\geq 1$ et $K|\breve{\Q}_p$, pour $$f:\mathring{\BB}^n_K\drt \M_\eta,$$ sont équivalents:
\begin{enumerate}
\item il existe $x\in |\M|$ tel que $\text{Im}(f)\subset \mathrm{sp}^{-1}(x)$
\item le $\O^+$-module $f^*\E^+$ est trivial.
\end{enumerate}
\item L'application $x\mapsto \mathrm{sp}^{-1}(x)$ induit une bijection
$$
\M(\Fpb) \iso \{ B\subset \M_\eta \ \text{ ouvert } \ | \ \ B\simeq \mathring{\BB}^d_{\breve{\Q}_p}\text{ et } \E^+_{|B} \text{ est trivial }\}.
$$
\end{enumerate}
\end{theo}

Du point de vue du corollaire \ref{coro:caracterisation concrete} dire que $\E^+_{|B}$ est trivial est équivalent à dire que le fibré rigide analytique $\E_{|B}$ est trivial et il existe une trivialisation 
$$
u:\O_B^m \iso \E_{|B}
$$
telle que pour tout $x$ comme dans \ref{coro:caracterisation concrete}, $x\in B(K)$, 
$$
x^*u: K^m\iso x^*\E 
$$
induise un isomorphisme 
$$
\O_K^m\iso \Lambda_x.
$$

\bibliographystyle{plain}
\bibliography{biblio}

\begin{thebibliography}{10}

\bibitem{BartenwerferPresqueAnnulation}
W.~Bartenwerfer.
\newblock Die h\"{o}heren metrischen {K}ohomologiegruppen affinoider
  {R}\"{a}ume.
\newblock {\em Math. Ann.}, 241(1):11--34, 1979.

\bibitem{BartenwerferAnnulation}
W.~Bartenwerfer.
\newblock Die strengen metrischen {K}ohomologiegruppen des
  {E}inheitspolyzylinders verschwinden.
\newblock {\em Nederl. Akad. Wetensch. Indag. Math.}, 44(1):101--106, 1982.

\bibitem{BergerColmezSen}
L.~Berger and P.~Colmez.
\newblock Th\'{e}orie de {S}en et vecteurs localement analytiques.
\newblock {\em Ann. Sci. \'{E}c. Norm. Sup\'{e}r. (4)}, 49(4):947--970, 2016.

\bibitem{BueltelPappas}
O.~Bueltel and G.~Pappas.
\newblock $({G},\mu)$-displays and rapoport-zink spaces.
\newblock {\em A paraitre au Journal de l'Institut de Mathématiques de
  Jussieu}.

\bibitem{ChaiExtensions}
C.-L. Chai.
\newblock Canonical coordinates on leaves of p-divisible groups: The two-slope
  case.
\newblock {\em Pr{é}publication}.

\bibitem{Colmez2}
P.~Colmez.
\newblock Espaces de {B}anach de dimension finie.
\newblock {\em J. Inst. Math. Jussieu}, 1(3):331--439, 2002.

\bibitem{Colmez3}
P.~Colmez.
\newblock Espaces vectoriels de dimension finie et repr\'esentations de de
  {R}ham.
\newblock {\em Ast\'erisque}, (319):117--186, 2008.
\newblock Repr{\'e}sentations $p$-adiques de groupes $p$-adiques. I.
  Repr{\'e}sentations galoisiennes et $(\phi,\Gamma)$-modules.

\bibitem{deJong1}
A.-J. de~Jong.
\newblock Crystalline dieudonné module theory via formal and rigid geometry.
\newblock {\em Inst. Hautes Études Sci. Publ. Math.}, 82:5--96, 1995.

\bibitem{DeligneCoordonnees}
P.~Deligne.
\newblock Cristaux ordinaires et coordonn\'{e}es canoniques.
\newblock In {\em Algebraic surfaces ({O}rsay, 1976--78)}, volume 868 of {\em
  Lecture Notes in Math.}, pages 80--137. Springer, Berlin-New York, 1981.
\newblock With the collaboration of L. Illusie, With an appendix by Nicholas M.
  Katz.

\bibitem{GroupesAnalytiquesI}
L.~Fargues.
\newblock Groupes analytiques rigides $p$-divisibles.
\newblock {\em A paraitre à Math. Annalen}.

\bibitem{LivreIso}
L.~Fargues.
\newblock L'isomorphisme entre les tours de {L}ubin-{T}ate et de {D}rinfeld et
  applications cohomologiques.
\newblock In {\em L'isomorphisme entre les tours de {L}ubin-{T}ate et de
  {D}rinfeld}, Progress in math., 262, pages 1--325. Birkh{\"a}user, 2008.

\bibitem{Courbe}
L.~Fargues and J.-M. Fontaine.
\newblock Courbes et fibr\'es vectoriels en th\'eorie de {H}odge $p$-adique.
\newblock {\em Ast{é}risque 406}.

\bibitem{Fontaine5}
J.-M. Fontaine.
\newblock {\em Groupes {$p$}-divisibles sur les corps locaux}.
\newblock Soci\'et\'e Math\'ematique de France, Paris, 1977.
\newblock Ast{\'e}risque, No. 47-48.

\bibitem{Har4}
M.~Harris, R.~Taylor.
\newblock {\em The geometry and cohomology of some simple {S}himura varieties},
  volume 151 of {\em Annals of Mathematics Studies}.
\newblock Princeton University Press, Princeton, NJ, 2001.

\bibitem{Katz2}
N.~Katz.
\newblock Crystalline cohomology, {D}ieudonn\'e modules, and {J}acobi sums.
\newblock In {\em Automorphic forms, representation theory and arithmetic
  ({B}ombay, 1979)}, volume~10 of {\em Tata Inst. Fund. Res. Studies in Math.},
  pages 165--246. Tata Inst. Fundamental Res., Bombay, 1981.

\bibitem{KatzCoordonnees}
N.~Katz.
\newblock Serre-{T}ate local moduli.
\newblock In {\em Algebraic surfaces ({O}rsay, 1976--78)}, volume 868 of {\em
  Lecture Notes in Math.}, pages 138--202. Springer, Berlin-New York, 1981.

\bibitem{KerzSaitoTamme}
M.~Kerz, S.~Saito, and G.~Tamme.
\newblock Towards a non-archimedean analytic analog of the bass-quillen
  conjecture.
\newblock {\em \url{https://arxiv.org/abs/1608.00703}}.

\bibitem{AC-BC}
A.-C. Le~Bras.
\newblock Espaces de {B}anach--{C}olmez et faisceaux coh\'{e}rents sur la
  courbe de {F}argues--{F}ontaine.
\newblock {\em Duke Math. J.}, 167(18):3455--3532, 2018.

\bibitem{Lourenco}
title= Louren{ç}o, J.N.P.'.

\bibitem{RapoportViehmann}
M.~Rapoport and E.~Viehmann.
\newblock Towards a theory of local {S}himura varieties.
\newblock {\em M\"{u}nster J. Math.}, 7(1):273--326, 2014.

\bibitem{RZ}
M.~Rapoport and T.~Zink.
\newblock {\em Period spaces for $p$-divisible groups.}
\newblock Number 141 in Annals of Mathematics Studies. Princeton University
  Press, Princeton, NJ, 1996.

\bibitem{SchneiderTaitelbaumFourier}
P.~Schneider and J.~Teitelbaum.
\newblock {$p$}-adic {F}ourier theory.
\newblock {\em Doc. Math.}, 6:447--481, 2001.

\bibitem{ScholzeCohomologyDiamonds}
P.~Scholze.
\newblock {\'E}tale cohomology of diamonds.
\newblock {\em \url{https://arxiv.org/abs/1709.07343}}.

\bibitem{ScholzeTorsion}
P.~Scholze.
\newblock On torsion in the cohomology of locally symmetric varieties.
\newblock {\em Ann. of Math. (2)}, 182(3):945--1066, 2015.

\bibitem{ScholzeWeinstein}
P.~Scholze and J.~Weinstein.
\newblock Moduli of {$p$}-divisible groups.
\newblock {\em Camb. J. Math.}, 1(2):145--237, 2013.

\bibitem{ScholzeBerkeley}
P.~Scholze and J.~Weinstein.
\newblock Berkeley lectures on $p$-adic geometry.
\newblock 2014.

\bibitem{VasiuZinkPurity}
A.~Vasiu and T.~Zink.
\newblock Purity results for {$p$}-divisible groups and abelian schemes over
  regular bases of mixed characteristic.
\newblock {\em Doc. Math.}, 15:571--599, 2010.

\bibitem{Zi1}
Th. Zink.
\newblock {\em Cartiertheorie kommutativer formaler Gruppen.}
\newblock Number~68. Teubner-Texte zur Mathematik [Teubner Texts in
  Mathematics],, 1984.

\end{thebibliography}
\end{document}